\newcommand{\ck}{{\bar{\bk}}}
\newcommand{\f}{\textbf}
\newtheorem{numbered}{}[section]
\newtheorem{Theorem}[numbered]{Theorem}
\newtheorem{nothing*}[numbered]{}
\newtheorem{remark}[numbered]{Remark}
\newtheorem{Definition}[numbered]{Definition}
\newtheorem{Lemma}[numbered]{Lemma}
\newtheorem{Proposition}[numbered]{Proposition}
\newtheorem{example}[numbered]{Example}
\def\bn{{\mathbb N}}
\def\br{{\mathbb R}}
\def\l{\lambda}
\def\i{\varepsilon}
\def\f{\varphi}
\def\N{\mathbb{N}}
\def\a{\alpha}
\def\b{\beta}
\def\m{\mu}
\def\d{\delta}
\def\g{\gamma}
\def\ca{\mathcal{ A}}
\def\ck{\mathcal{K}}
\def\cq{\mathcal{ Q}}
\def\cs{\mathcal{ S}}
\def\ct{\mathcal{ T}}
\def\ga{{\frak A}}
\def\id{{\bf 1}\!\!{\rm I}}
\begin{document}

\begin{center}
{\Large {\bf Generalized Dobrushin Ergodicity Coefficient and Ergodicities of Non-homogeneous Markov Chains}}\\[1cm]

{\sc Farrukh Mukhamedov} \\[2mm]

 Department of Mathematical Sciences, College of Science, \\
 United Arab Emirates University 15551, Al-Ain,\\ United
Arab Emirates.\\

e-mail: {\tt far75m@gmail.com; farrukh.m@uaeu.ac.ae}\\[1cm]

{\sc  Ahmed Al-Rawashdeh} \\[2mm]

Department of Mathematical Sciences, College of Science, \\
United Arab Emirates University 15551, Al-Ain, \\ United
Arab Emirates.\\
 e-mail: {{\tt aalrawashdeh@uaeu.ac.ae}}\\[1cm]
\end{center}

\small
\begin{center}
{\bf Abstract}\\
\end{center}

In our earlier paper, a generalized Dobrushin ergodicity
coefficient of Markov operators (acting on abstract state spaces)
with respect to a projection $P$, has been introduced and studied.
It turned out that the introduced coefficient was more effective
than the usual ergodicity coefficient. In the present work, by
means of a left consistent Markov projections and the generalized
Dobrushin's ergodicity coefficient, we investigate uniform and
weak $P$-ergodicities of non-homogeneous discrete Markov chains
(NDMC) on abstract state spaces. It is easy to show that uniform
$P$-ergodicity implies a weak one, but in general the reverse is
not true. Therefore, some conditions are provided together with
weak $P$-ergodicity of NDMC which imply its uniform
$P$-ergodicity. Furthermore, necessary and sufficient conditions
are found by means of the Doeblin's condition for the weak
$P$-ergodicity of NDMC. The weak $P$-ergodicity is also
investigated in terms
 of perturbations. Several perturbative results are obtained which allow us to produce
nontrivial examples of uniform and weak $P$-ergodic NDMC.
Moreover, some category results are also obtained.  We stress that
all obtained results have potential applications in the classical
and non-commutative probabilities. \\[2mm]
\textit{MSC}: 47A35; 60J10, 28D05\\
\textit{Key words}:  uniform $P$-ergodic; weak $P$-ergodic; Markov
operator; projection; ergodicity coefficient; non-homogeneous
discrete Markov chain;

\normalsize

\section{Introduction}

The present work is a continuation of the paper \cite{MA} where we
have introduced a generalized Dobrushin ergodicity coefficient
$\d_P(T)$ of Markov operators (acting on abstract state spaces)
with respect to a projection $P$, and studied its properties. It
turns out that the introduced coefficient was more effective than
the usual Dobrushin's ergodicity coefficient \cite{D}. Indeed,
uniform stability of the trajectories of Markov operator to some
projection have been investigated by means of the generalized
Dobrushin's ergodicity coefficient, while usual ergodicty
coefficient is not applicable in that situation.  We stress that
in the literature, much attention is paid to homogeneous Markov
processes (see \cite{B,BK,BK1,EW1,J1,SZ}). However, a limited
number of papers (see \cite{EM20181,M2013,M0,M01,VW}) were devoted
to investigations of ergodic properties of nonhomogeneous Markov
processes in the abstract scheme. In those papers the limiting
operators are considered as one-dimensional projections.
Therefore, one of the aims of the present paper is to investigate
stabilities of nonhomogeneous discrete Markov chains (NDMC) to
some projection in the abstract framework. We notice that this
abstract scheme contains both classical and quantum settings as
particular cases \cite{Alf,E} which impies that the obtained
results will be new in both cases. In this abstract setting,
certain limiting behaviors of Markov operators were investigated
in \cite{B,EW2,GQ,SZ}.

It is stressed that due to the nonhomogenety of Markov processes,
the investigations of limiting behavior of such processes become
very complicated. The Dobrushin's ergodicity coefficient was
effectively used in the investigation of ergodicity of
nonhomogeneous Markov chains when their limit is a one-dimensional
projection (see \cite{D,HB,HH,IS,J1,Paz,P,Rod,Se,T,ZI}). If the
limit of the chain is a more general projection then the usual
coefficient is not effective. Therefore, using the generalized
coefficient, we are going to investigate uniform and weak
ergodicities of NDMC in the abstract state spaces. In \cite{H,HR}
a very simple case of NDMC (on finite dimensional spaces) has been
investigated when the limit is a projection. We point out that all
obtained results will be new for classical and quantum Markov
chains.

As it is mentioned that our purpose is to investigate the
stability (in uniform and weak topologies) of NDMC acting on
abstract state spaces. Here, by an abstract state space it is
meant an ordered Banach space, where the norm has an additivity
property on the cone of positive elements. Examples of these
spaces include all classical $L^1$-spaces and the space of density
operators acting on some Hilbert spaces \cite{Alf,Jam}. Moreover,
any Banach space can be embedded into some abstract spaces (see
Appendix, Example \ref{E13}). In the present paper, we are going
to study the asymptotic stabilities  of NDMC in terms of
generalized Dobrushin's ergodicity coefficient.  We notice that
the Dobrushin coefficient (which extends $\d(P)$ to abstract state
spaces) has been introduced and studied in \cite{GQ,M0,M01}, for
Markov operators acting on abstract state spaces.

Let us briefly describe the organization of the paper. In Section
2, we provide preliminary definitions and results on properties of
the generalized Dobrushin's ergodicty coefficient. Moreover, the
uniform and weak $P$-ergodicities of NDMC are defined. In Section
3, we introduce sequences of left consistent Markov projections
which will be used in the forthcoming sections. We note that if
the chain is homogeneous then the uniform and weak
$P$-ergodicities coincide \cite{MA}, but in the non-homogeneous
setting the situation is much more complicated \cite{Paz,Se,T}.
Therefore, in Section 4, certain relations between these notions
are going to be investigated. The results of this section are
known facts related to NDMC \cite{Paz,M0} when the limiting
projection is one-dimensional. In Section 5, necessary and
sufficient conditions are established by means of the Doeblin's
condition for the weak $P$-ergodicity of NDMC. These extend the
results of \cite{DP,M0,M01} to an abstract scheme. Note that in
\cite{DP} similar conditions were found for classical
nonhomogeneous Markov processes to satisfy weak ergodicity. In
Section 6, we study the weak $P$-ergodicity in terms of
perturbations. Several perturbation results are obtained which
allow us to produce nontrivial examples of uniform and weak
$P$-ergodic NDMC. Finally, in Section 7 some category results are
also obtained. Namely, in the set of all NDMC (consistent with a
left decreasing sequence of projections $\{P_n\}$)
$\frak{S}_{\{P_n\}}(X)$, we introduce a mertric, according to the
set $\frak{S}_{\{P_n\}}^{w}(X)$ of all weak ergodic w.r.t.
$\{P_n\}$ NDMC is a $G_\d$-dense subset of
$\frak{S}_{\{P_n\}}(X)$. Some similar kinds of results have been
proved in \cite{P} when $X=\ell^1$. We stress that all obtained
results have potential applications in the classical and
non-commutative probabilities.

\section{Preliminaries}

In this section, we recall some necessary definitions and results
about abstract state spaces.

 Let $X$ be an ordered vector space
with a cone $X_+=\{x\in X: \ x\geq 0\}$. A subset $\ck$ is called
a {\it base} for $X$, if  $\ck=\{x\in X_+:\ f(x)=1\}$ for some
strictly positive (i.e. $f(x)>0$ for $x>0$) linear functional $f$
on $X$. An ordered vector space $X$ with generating cone $X_+$
(i.e. $X=X_+-X_+$) and a fixed base $\ck$, defined by a functional
$f$, is called {\it an ordered vector space with a base}
\cite{Alf}.  Let $U$ be the convex hull of the set
$\ck\cup(-\ck)$, and let
$$
\|x\|_{\ck}=\inf\{\l\in\br_+:\ x\in\l U\}.
$$
Then one can see that $\|\cdot\|_{\ck}$ is a seminorm on $X$.
Moreover, one has $\ck=\{x\in X_+: \ \|x\|_{\ck}=1\}$,
$f(x)=\|x\|_{\ck}$ for $x\in X_+$. Assume that the seminorm
becomes a norm and $X$ is a complete space w.r.t. this norm and
$X_+$ is closed subset, then $(X,X_+,\ck,f)$ is called
\textit{abstract state space}. In this case, $\ck$ is a closed
face of the unit ball of $X$, and $U$ contains the open unit ball
of $X$. If the set $U$ is \textit{radially compact} \cite{Alf},
i.e. $\ell\cap U$ is a closed and bounded segment for every line
$\ell$ through the origin of $X$, then  $\|\cdot\|_{\ck}$ is a
norm. The radial compactness is equivalent to the coincidence of
$U$ with the closed unit ball of $X$.  In this case, $X$  is
called a \textit{strong abstract state space}.  In the sequel, for
the sake of simplicity, instead of $\|\cdot\|_{\ck}$, the standard
notation $\|\cdot\|$ is used. For a better understanding of the
difference between a strong abstract state space and a more
general class of base norm spaces, the reader is referred to
\cite{Yo}.

A positive cone $X_+$  of an ordered Banach space $X$  is said to
be $\l$-generating if, given $x\in X$, we can find $y,z\in X_+$
such that $x = y- z$ and $\|y\| + \|z\|\leq\l \|x\|$. The norm on
$X$ is  called \textit{regular} (respectively, \textit{strongly
regular}) if, given $x$ in the open (respectively, closed) unit
ball of $X$, $y$ can be found in the closed unit ball with $y\geq
x$ and $y\geq -x$. The norm is said to be additive on $X_+$ if
$\|x + y\| = \|x\| + \|y\|$ for all $ x, y\in X_+$. If $X_+$ is
1-generating, then  $X$ can be shown to be strongly regular.
Similarly, if $X_+$ is $\l$-generating for all $\l > 1$, then $X$
is regular \cite{Yo}. The following results are well-known.

\begin{Theorem}\cite[p.90]{WN} Let  $X$ be an ordered Banach space with closed positive cone $X_+$.
Then te following statements are equivalent:
\begin{itemize}
\item[(i)] $X$ is an abstract state space; \item[(ii)] $X$ is
regular, and the norm is additive on $X_+$; \item[(iii)] $X_+$ is
$\l$-generating for all $\l > 1$, and the norm is additive on
$X_+$.
\end{itemize}
\end{Theorem}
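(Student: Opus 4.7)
The plan is to establish the equivalences cyclically in the order $(i) \Rightarrow (ii) \Rightarrow (iii) \Rightarrow (i)$, with the bulk of the work concentrated in the last step. The first implication essentially unpacks definitions: for $x, y \in X_+$ the identity $\|x+y\| = \|x\| + \|y\|$ follows from $\|\cdot\|_{\ck} = f(\cdot)$ on $X_+$ together with the linearity of $f$; for regularity, any $x$ in the open unit ball lies in $U$, hence can be written as $\alpha a - \beta b$ with $a, b \in \ck$ and $\alpha + \beta \leq 1$, and then $y := \alpha a + \beta b \in X_+$ satisfies $y \geq \pm x$ with $\|y\| = \alpha + \beta \leq 1$ by additivity.

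For $(ii) \Rightarrow (iii)$, the idea will be: given $x \neq 0$ and $\lambda > 1$, pick $\varepsilon \in (1/\lambda, 1)$ and apply regularity to $\varepsilon x/\|x\|$, producing a dominating element $y$ in the closed unit ball. Writing $y = u + v$ with $u := \tfrac{1}{2}(y + \varepsilon x/\|x\|)$ and $v := \tfrac{1}{2}(y - \varepsilon x/\|x\|)$ in $X_+$, additivity yields $\|u\| + \|v\| = \|y\| \leq 1$; rescaling by $\|x\|/\varepsilon$ then provides a decomposition $x = y' - z'$ with $\|y'\| + \|z'\| \leq \|x\|/\varepsilon < \lambda\|x\|$, establishing $\lambda$-generation.

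The nontrivial direction $(iii) \Rightarrow (i)$ requires constructing the base and its functional. Since $X_+$ is $\lambda$-generating for some $\lambda > 1$, one has $X = X_+ - X_+$, so I would define $f(y - z) := \|y\| - \|z\|$ for $y, z \in X_+$. Additivity on $X_+$ makes $f$ well-defined and linear (if $y - z = y' - z'$, then $y + z' = y' + z$, whence $\|y\| + \|z'\| = \|y'\| + \|z\|$), and strict positivity is immediate from $f(x) = \|x\|$ on $X_+$. Boundedness follows from the $\lambda$-generating estimate $|f(x)| \leq \|y\| + \|z\| \leq \lambda\|x\|$, which on letting $\lambda \downarrow 1$ yields $|f(x)| \leq \|x\|$. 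Setting $\ck := \{x \in X_+ : f(x) = 1\}$ and $U := \mathrm{conv}(\ck \cup (-\ck))$, the remaining task is to verify $\|\cdot\|_{\ck} = \|\cdot\|$: the triangle inequality applied to any representation $x \in \mu U$ gives $\|x\| \leq \|x\|_{\ck}$, while normalizing a $\lambda$-generating decomposition of $x$ by the norms of its positive and negative parts exhibits $x \in (\|y\| + \|z\|)U$, yielding $\|x\|_{\ck} \leq \|x\|$ after $\lambda \downarrow 1$. The main obstacle is precisely this final coincidence of norms, together with careful treatment of the degenerate cases $y = 0$ or $z = 0$ (when $x$ already lies in $\pm X_+$ and the normalization step needs to be bypassed).
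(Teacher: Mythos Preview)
The paper does not prove this theorem; it is quoted verbatim from \cite[p.~90]{WN} as a known characterization and used only as background, so there is no ``paper's own proof'' to compare against.

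That said, your outline is a correct and standard argument. A couple of minor points worth tightening: in $(i)\Rightarrow(ii)$, the fact that the open unit ball is contained in $U$ is exactly what the paper records just after defining $\|\cdot\|_{\ck}$, so your use of it is legitimate; and in $(iii)\Rightarrow(i)$ you should also remark that completeness of $X$ in $\|\cdot\|_{\ck}$ and closedness of $X_+$ are inherited automatically once you have shown $\|\cdot\|_{\ck}=\|\cdot\|$, since these are part of the definition of an abstract state space in the paper. With those observations the cycle $(i)\Rightarrow(ii)\Rightarrow(iii)\Rightarrow(i)$ is complete.
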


\begin{Theorem}\cite{Yo}\label{Yo} Let  $X$ be an ordered Banach space with closed positive cone $X_+$.
Then the following statements are equivalent:
\begin{itemize}
\item[(i)] $X$ is a strong abstract state space; \item[(ii)] $X$
is strongly regular, and the norm is additive on $X_+$;
\item[(iii)] $X_+$ is 1-generating and the norm is additive on
$X_+$.
\end{itemize}
\end{Theorem}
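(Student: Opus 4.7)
The plan is to close the cycle (i)$\Rightarrow$(ii)$\Rightarrow$(iii)$\Rightarrow$(i), exploiting at each step the explicit description of $U$ together with norm additivity on $X_+$. For (i)$\Rightarrow$(ii): since $X$ is strong, $U$ equals the closed unit ball. Given $\|x\|\leq 1$, I would use convexity of $\ck$ and of $-\ck$ separately to collapse an arbitrary convex combination of members of $\ck\cup(-\ck)$ into $x=\alpha u-\beta v$ with $u,v\in\ck$ and $\alpha,\beta\geq 0$, $\alpha+\beta=1$. Setting $y=\alpha u+\beta v\in X_+$, the identities $y-x=2\beta v\geq 0$ and $y+x=2\alpha u\geq 0$, together with $\|y\|=\alpha+\beta=1$ (by additivity, which is automatic since $X$ is already an abstract state space), yield strong regularity.

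For (ii)$\Rightarrow$(iii): apply strong regularity to $x/\|x\|$ (for $x\neq 0$) to obtain $w$ in the closed unit ball with $w\geq \pm x/\|x\|$, and then set $y=\tfrac{1}{2}(\|x\|w+x)$ and $z=\tfrac{1}{2}(\|x\|w-x)$. These lie in $X_+$, satisfy $y-z=x$ and $y+z=\|x\|w$, and additivity gives $\|y\|+\|z\|=\|y+z\|=\|x\|\,\|w\|\leq\|x\|$, establishing $1$-generating.

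For (iii)$\Rightarrow$(i): since $1$-generating implies $\lambda$-generating for every $\lambda>1$, Theorem 2.1 already supplies that $X$ is an abstract state space, with $f$, $\ck$ and the norm $\|\cdot\|_{\ck}=\|\cdot\|$ all in place; only the identity $U=\{x:\|x\|\leq 1\}$ remains. The inclusion $U\subseteq\{x:\|x\|\leq 1\}$ is immediate from the triangle inequality. For the converse, use $1$-generating to write $x=y-z$ with $y,z\in X_+$ and $\alpha:=\|y\|$, $\beta:=\|z\|$, $\alpha+\beta\leq 1$; normalizing, $x=\alpha u-\beta v$ with $u,v\in\ck$ (apart from the trivial cases $y=0$ or $z=0$), and since $0=\tfrac{1}{2}u+\tfrac{1}{2}(-u)\in U$, one may write $x=(\alpha+\beta)\bigl(\tfrac{\alpha}{\alpha+\beta}u+\tfrac{\beta}{\alpha+\beta}(-v)\bigr)+(1-\alpha-\beta)\cdot 0$, which is a convex combination of two elements of $U$, whence $x\in U$.

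The principal subtlety I anticipate is in the last implication: handling the possibility $\alpha+\beta<1$ (and the degenerate cases where $y$ or $z$ vanishes) requires both the observation $0\in U$ and the separate convexity of $\ck$ and $-\ck$, which together upgrade an arbitrary decomposition $x=\alpha u-\beta v$ with $\alpha+\beta\leq 1$ to a genuine convex combination of elements of $\ck\cup(-\ck)$ rather than only the boundary case $\alpha+\beta=1$. Everything else is a direct unpacking of definitions.
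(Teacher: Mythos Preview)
The paper does not supply a proof of this theorem at all: it is quoted verbatim from \cite{Yo} and used as background, so there is no ``paper's own proof'' to compare against. Your cycle (i)$\Rightarrow$(ii)$\Rightarrow$(iii)$\Rightarrow$(i) is a correct and standard way to establish the result directly from the definitions given in Section~2, and each step is sound as written (including the handling of the degenerate cases $\alpha=0$, $\beta=0$, and $\alpha+\beta<1$ in the last implication).

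Two minor remarks that would tighten the write-up. In (i)$\Rightarrow$(ii) you should state explicitly that norm additivity on $X_+$ is already part of the abstract state space structure (since $\|x\|=f(x)$ for $x\in X_+$), so only strong regularity needs work; you allude to this parenthetically but it deserves a sentence. In (iii)$\Rightarrow$(i), your appeal to Theorem~2.1 is doing real work: it furnishes the functional $f$ and identifies the given norm with the base norm $\|\cdot\|_{\ck}$, which is exactly what licenses the equalities $\|u\|=\|v\|=1$ for $u,v\in\ck$ that you then use. Making that dependence explicit would remove any doubt about circularity.
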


In this paper, we consider a general abstract state space for
which the convex hull of the base $\ck$ and $-\ck$ is not assumed
to be radially compact (in our previous papers
\cite{EM2017,EM2018,M0,M01} this condition was essential). This
consideration has an important advantage: whenever $X$ is an
ordered Banach space with a generating cone $X_+$ whose norm is
additive on $X_+$, then $X$ admits an equivalent norm that
coincides with the original norm on $X_+$ and renders $X$ that
base norm space. Hence, to apply the results of the paper one
would only have to check if the norm is additive on $X_+$.

Let $(X,X_+,\ck,f)$ be n abstract state space. A linear operator $T:X\to X$ is
called \textit{positive}, if $Tx\geq 0$ whenever $x\geq 0$. A positive linear
operator $T:X\to X$ is said to be {\it Markov}, if $T(\ck)\subset\ck$.
It is clear that $\|T\|=1$, and its adjoint mapping $T^*: X^*\to
X^*$ acts in ordered Banach space $X^*$ with unit $f$, and moreover,
one has $T^*f=f$.

Recall that a family of Markov operators $\{T^{m,n}: X\to X\}$
($m\leq n$, $m,n\in\N$) is called a \textit{non-homogeneous
discrete Markov
 chain (NDMC)} if
$$
T^{m,n}=T^{k,n}T^{m,k-1}
$$
for every $m\leq k\leq n$. Due to this property, to any NDMC
$\{T^{m,n}\}$ one can associate a sequence $\{T_n\}_{n=1}^\infty$, (where
$T_n=T^{n,n+1}$) of Markov operators. Conversely, any given a
sequence of Markov operators $\{T_n\}_{n=1}^\infty$ on $X$ and for
$k<n$, by putting
\[T^{k,n}:=T_nT_{n-1}\ldots T_{k+1}.\]
we also can define a NDMC $\{T^{k,n}\}$. This chain is generated
by $\{T_n\}$, such a sequence $\{T_n\}$ is called
\textit{generating sequence} of the NDMC. Therefore, NDMC
$\{T^{k,n}\}$ can be identified with its generating sequence. In
the last section 7, we will use this identification.

Recall that if for a given NDMC $\{T^{k,m}\}$ one has
$T^{k,m}=(T^{0,1})^{m-k}$, then such a chain becomes {\it
homogeneous}. In what follows, by $\{T^n\}$ we denote a homogeneous
Markov chain, where $T:=T^{0,1}$. Equivalently, any NDMC is
homogeneous, if its generating sequence is stationary, i.e.
$T_n=T_1$ for all $n\in\bn$.\\

Let $(X,X_+,\ck,f)$ be an abstract state space and let $\{T^n\}$
be a homogeneous Markov chain on $X$. Consider a projection
operator $P: X\to X$ (i.e. $P^2=P$). According to \cite{MA}
$\{T^n\}$ is called \textit{uniformly $P$-ergodic} if
$$
\lim_{n\to\infty}\|T^{n}-P\|=0.
$$
From this definition we immediately find that $P$ must be a Markov
projection.

Analogously, we say that a NDMC  $\{T^{m,n}\}$ is called {\it
uniformly $P$-ergodic} if for every $m\geq 0$ one has
$$
\lim_{n\to\infty}\|T^{m,n}-P\|=0.
$$

We note that if $P=T_y$, for some $y\in X_+$, where
$T_y(x)=f(x)y$, then the uniform $P$-ergodicity coincides with
uniform ergodicity or uniform asymptotical stability considered in
\cite{M0,M01}.
%
%

In \cite{MA}, we have introduced a generalized notion of the
Dobrushin's ergodicity coefficient as follows:

Let $(X,X_+,\ck,f)$ be an abstract state space and let $T:X\to X$
be a linear bounded operator and $P$ be a non-trivial projection
operator on $X$. Then we define
\begin{equation} \label{Dbp} \d_P(T)=\sup_{x\in N_P,\ x\neq
0}\frac{\|Tx\|}{\|x\|},
\end{equation}
where
\begin{equation}\label{Np} N_P=\{x\in X: \
Px=0\}.
\end{equation}

If $P=I$, we put $ \d_P(T)=1$. The quantity $\d_P(T)$ is called
the \textit{generalized Dobrushin ergodicity coefficient of $T$
with respect to $P$}.

We notice that if $X=\br^n$, then there are some formulas to
calculate this coefficient (see \cite{H,HR}).

In the following remarks, let us have a brief comparison between
the coefficients $\d_P(T)$ and $\d(T)$. It is noticed that $\d(T)$
has been introduced and investigated in \cite{M0,M01}.

\begin{remark}
Let $y_0\in \ck$ and consider the projection $Px=f(x)y_0$. Then
one can see that $N_P$ coincides with
\[N= \{ x\in X;\ f(x)=0\},\]
and in this case $\d_P(T)=\d(T)$. Hence, $\d_P(T)$ indeed is a
generalization of $\d(T)$.
\end{remark}

\begin{remark}
Let $P$ be a Markov projection on $X$. Then, for any Markov
operator $T:X\to X$
\[\d_P(T) \leq \d(T).\]
\end{remark}

Using this coefficient, we define weak $P$-ergodicity of NDMC.
Namely, a NDMC  $\{T^{m,n}\}$ is called {\it weakly $P$-ergodic}
if for every $m\geq 0$ one has
$$
\lim_{n\to\infty}\d_P(T^{m,n})=0.
$$

We point out that the relations between uniform and week
$P$-ergodicities will be discussed in Section 4.

Let us denote by $\Sigma(X)$ the set of all Markov operators
defined on $X$, and by $\Sigma_P(X)$ we denote the set of all
Markov operators $T$ on $X$ with $PT=TP$.\\

We recall certain properties of $\d_P(T)$, which are given in the
following theorem:

\begin{Theorem}\cite{MA}\label{MA1-Thr. 3.7} Let $(X,X_+,\ck,f)$ be an abstract state space, $P$ be a projection on $X$ and let $T,S\in \Sigma(X)$. Then:
\begin{enumerate}
\item[(i)] $0\leq \d_P(T)\leq 1$;
\item[(ii)]  $|\d_P(T)-\d_P(S)|\leq\d_P(T-S)\leq \|T-S\|$;
\item[(iii)] if $P\in \Sigma(X)$,  one has
\begin{equation}\label{dpuv} \d_P(T)\leq \frac{\l}{2}\sup \{\|Tu-Tv\|;\ u,v\in \ck \ \text{with}\ u-v\in N_P \}.
\end{equation}
\item[(iv)]  if $H: X\to X$ is a bounded linear
operator such that $HP=PH$, then $$\d_P(TH)\leq\d_P(T)\|H\|;$$
\item[(v)]   if $H: X\to X$ is a bounded linear
operator such that $PH=0$, then $$\|TH\|\leq\d_P(T)\|H\|;$$
\item[(vi)]  if $S\in \Sigma_P(X)$, then $$\d_P(TS)\leq\d_P(T)\d_P(S).$$
\end{enumerate}
\end{Theorem}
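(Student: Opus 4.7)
The plan is to check each item directly from the definition $\d_P(T) = \sup_{x \in N_P,\, x \neq 0} \|Tx\|/\|x\|$, leveraging the abstract state space structure and the hypotheses on $P$ or the auxiliary operator as needed.

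For (i), since $T$ is Markov we have $\|T\|=1$, and $\d_P(T)$ is a supremum of ratios bounded by $\|T\|$, giving $0\le\d_P(T)\le 1$. Item (ii) is a one-line calculation: the triangle inequality applied to $Tx = (T-S)x + Sx$ for $x\in N_P$ gives $\d_P(T)\le\d_P(T-S)+\d_P(S)$, symmetry provides the left inequality, and the second bound follows because $\d_P(T-S)$ is a supremum over a subset of the unit ball used to define $\|T-S\|$.

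The main work is in (iii). Assuming $P$ is Markov, I would first observe that $f\circ P = f$ on $X_+$ (for $u\in\ck$, $Pu\in\ck$ forces $f(Pu)=1=f(u)$, and this extends by positive scaling and linearity). Given a nonzero $x\in N_P$, decompose $x=y-z$ with $y,z\in X_+$ and $\|y\|+\|z\|\le\l\|x\|$, which is possible since $X_+$ is $\l$-generating. Then $f(y)-f(z)=f(x)=f(Px)=0$, so $f(y)=f(z)$; the strict positivity of $f$ on $X_+$ rules out the degenerate case $y=0$ unless $x=0$. Setting $u=y/f(y)$ and $v=z/f(z)$ gives $u,v\in\ck$ and $u-v=x/f(y)$; moreover $P(u-v)=Px/f(y)=0$, so $u-v\in N_P$. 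Since $f(y)=\|y\|=\|z\|=(\|y\|+\|z\|)/2\le\l\|x\|/2$, one obtains
\[ \|Tx\| = f(y)\,\|Tu-Tv\| \le \frac{\l\|x\|}{2}\sup\{\|Tu-Tv\|:u,v\in\ck,\,u-v\in N_P\}, \]
and dividing by $\|x\|$ and passing to the sup over $x\in N_P$ yields (iii). I expect this decomposition argument to be the main obstacle, principally in using the Markov property of $P$ to secure $f(y)=f(z)$ and in matching the scaling to the $\l/2$ constant.

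Items (iv)--(vi) then follow the same pattern: verify that $Hx$ or $Sx$ again lies in $N_P$, and apply the definition of $\d_P(T)$ to this vector. For (iv), $HP=PH$ and $Px=0$ give $PHx=0$, so $\|THx\|\le\d_P(T)\|Hx\|\le\d_P(T)\|H\|\|x\|$. For (v), the assumption $PH=0$ forces $Hx\in N_P$ for every $x\in X$, and the same chain yields $\|TH\|\le\d_P(T)\|H\|$. For (vi), $S\in\Sigma_P(X)$ means $PS=SP$, hence $x\in N_P$ implies $Sx\in N_P$, and then $\|TSx\|\le\d_P(T)\|Sx\|\le\d_P(T)\d_P(S)\|x\|$, where the last bound is permissible precisely because $x\in N_P$.
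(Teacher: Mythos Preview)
Your proof is correct. The present paper does not actually prove this theorem---it is quoted from the authors' earlier work \cite{MA}---but it does record Lemma~\ref{MA1-Lemma 3.6} (also from \cite{MA}), which is exactly the decomposition you carry out for part~(iii): every nonzero $x\in N_P$ can be written as $\alpha(x)(u-v)$ with $u,v\in\ck$, $u-v\in N_P$, and $\alpha(x)\le\frac{\lambda}{2}\|x\|$. Your argument for (iii)---decompose $x=y-z$ in $X_+$ via $\lambda$-generation, use $f\circ P=f$ (i.e.\ $P^*f=f$ for Markov $P$) to obtain $f(y)=f(z)$, then normalize---is a direct proof of that lemma, and items (i), (ii), (iv)--(vi) are the expected one-line verifications from the definition of $\delta_P$. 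So your proposal is sound and, to the extent one can infer the intended argument from the stated Lemma~\ref{MA1-Lemma 3.6}, follows the same route.
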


\noindent We stress that the condition $PS=SP$ in (vi)
can be weakened as follows:
\begin{Proposition}\label{dp12-less-1-2}
  Let $(X,X_+,\ck,f)$ be an abstract state space, $P$ be a projection on $X$ and let $T_1$ and $T_2$ be operators on $X$. If $T_2(N_P)\subseteq N_P$, then
  \[\d_P(T_1T_2)\leq\d_P(T_1)\d_P(T_2). \]
\end{Proposition}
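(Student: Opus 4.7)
The plan is to use the invariance hypothesis $T_2(N_P) \subseteq N_P$ to apply the defining supremum for $\delta_P(T_1)$ to vectors of the form $T_2 x$, and then chain it with the defining supremum for $\delta_P(T_2)$. This is precisely the structural role that the commutation condition $PS=SP$ plays in part (vi) of Theorem \ref{MA1-Thr. 3.7}, and the point of the proposition is that this weaker invariance already suffices.

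Concretely, I would fix an arbitrary $x \in N_P$ with $x \neq 0$ and proceed as follows. First, by the hypothesis $T_2(N_P) \subseteq N_P$, the vector $T_2 x$ lies in $N_P$. If $T_2 x = 0$, then $T_1 T_2 x = 0$ and the desired bound $\|T_1 T_2 x\| \le \delta_P(T_1)\delta_P(T_2)\|x\|$ holds trivially. Otherwise $T_2 x$ is a nonzero element of $N_P$, so the definition \eqref{Dbp} of $\delta_P(T_1)$, specialized to the vector $T_2 x$, gives
\[
\|T_1(T_2 x)\| \leq \delta_P(T_1)\,\|T_2 x\|.
\]
Applying the definition \eqref{Dbp} of $\delta_P(T_2)$ to $x \in N_P$ yields $\|T_2 x\| \le \delta_P(T_2)\,\|x\|$, and combining the two inequalities gives
\[
\|T_1 T_2 x\| \leq \delta_P(T_1)\,\delta_P(T_2)\,\|x\|.
\]

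Finally, dividing by $\|x\|$ and taking the supremum over all nonzero $x \in N_P$ produces $\delta_P(T_1 T_2) \le \delta_P(T_1)\delta_P(T_2)$. There is essentially no obstacle here; the only subtlety is handling the degenerate case $T_2 x = 0$ so that the first inequality is justified. It is also worth remarking, for context, that part (vi) of Theorem \ref{MA1-Thr. 3.7} is recovered: if $S$ commutes with $P$ then $x \in N_P$ implies $PSx = SPx = 0$, so $S(N_P) \subseteq N_P$, and the proposition applies with $T_2 = S$.
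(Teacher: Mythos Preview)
Your argument is correct and is essentially identical to the paper's own proof: both apply the definition of $\delta_P$ first to $T_2 x\in N_P$ (using the invariance hypothesis) and then to $x\in N_P$, chaining the two inequalities and taking the supremum. Your explicit handling of the case $T_2 x=0$ and the remark recovering part~(vi) are harmless additions that the paper omits.
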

\begin{proof}
For all $x\in N_P$ we have $T_2x\in N_P$, so
 \begin{eqnarray*}
\|T_1(T_2x)\| &\leq & \d_P(T_1)\|T_2x\|\\
& \leq & \d_P(T) \d_P(T_2)\|x\|,
\end{eqnarray*}
which implies
\[\frac{\|T_1T_2x\|}{\|x\|}\leq \d_P(T_1)\d_P(T_2),\ \forall \ x\in N_P,\]
then
\[\d_P(T_1T_2)\leq \d_P(T_1)\d_P(T_2),\]
and hence the result follows.
\end{proof}

\begin{Lemma}\label{TNp-in-Npmeans}
  Let $(X,X_+,\ck,f)$ be an abstract state space, $P$ be a projection on $X$ and let $T$ be an operator on $X$. Then $T(N_P)\subseteq N_P$ if and only if
  $PT=PTP$.
\end{Lemma}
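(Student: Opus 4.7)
The plan is to prove both implications by exploiting the decomposition $x = Px + (x-Px)$ available for every $x \in X$, which is valid because $P$ is a projection and hence $P(x - Px) = Px - P^2x = 0$, so that $x - Px \in N_P$. This decomposition is the sole technical ingredient; once it is in hand, both directions follow immediately from the definition $N_P = \{x \in X : Px = 0\}$.

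For the ``if'' direction, I would assume $PT = PTP$ and take an arbitrary $x \in N_P$, so $Px = 0$. Applying $PT$ to $x$ gives $PTx = PTPx = PT(0) = 0$, hence $Tx \in N_P$. This shows $T(N_P) \subseteq N_P$.

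For the ``only if'' direction, I would assume $T(N_P) \subseteq N_P$ and pick any $x \in X$. Since $x - Px \in N_P$ by the observation above, the hypothesis yields $T(x - Px) \in N_P$, i.e.\ $PT(x - Px) = 0$. Expanding this gives $PTx - PTPx = 0$, so $PTx = PTPx$. Since $x$ was arbitrary, $PT = PTP$.

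There is no real obstacle here; the only subtlety worth flagging is that we are using $P^2 = P$ in a genuinely essential way (to produce elements of $N_P$ from arbitrary $x$ via $x - Px$), and that no positivity, boundedness, or Markov property of $T$ or $P$ is needed for the equivalence — it is a purely algebraic statement about a linear operator and an idempotent.
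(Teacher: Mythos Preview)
Your proof is correct and follows essentially the same approach as the paper: both directions hinge on the observation $N_P=(I-P)X$, i.e.\ that $x-Px\in N_P$ for every $x\in X$, and the argument is purely algebraic. Your ``only if'' direction is in fact written more cleanly than the paper's, which starts from $x\in N_P$ and represents it as $y-Py$ before concluding $PTy=PTPy$, whereas you go directly from an arbitrary $x$ to $PTx=PTPx$.
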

\begin{proof}
Assume that $PT=PTP$. If $x\in N_P$, then $P(Tx)=PTP(x)=0$, so we get $Tx\in N_P$. Conversely, suppose that $T(N_P)\subseteq N_P$ and $x\in N_P$ and as $N_P=(I-P)X$,
 so $x=(y-Py)$, for some $y\in X$. Therefore,
 \[ 0= PTx= PT(y-Py)=PTy-PTPy,\]
 which implies that $PT=PTP$.
\end{proof}
\begin{remark}\label{PT-PTP}
  It is easy to check that if $PT=P$ or $PT=TP$, then $PT=PTP$.
\end{remark}

In what follows, we need the following auxiliary fact.

\begin{Lemma}\cite{MA}\label{MA1-Lemma 3.6} Let $(X,X_+,\ck,f)$ be an abstract state space and let $P$ be a Markov projection. Then for every $x\in N_P$ there
exist $u,v\in \ck$ with $u-v\in N_P$ such that
$$
x=\a(x)(u-v),
$$
where $\a(x)\in \br_+$ and $\a(x)\leq \frac{\l}{2}\|x\|$.
\end{Lemma}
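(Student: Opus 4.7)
The plan is to exploit two structural facts. First, since $X$ is an abstract state space, its positive cone $X_+$ is $\lambda$-generating for every $\lambda > 1$; given $x \in N_P$ one therefore decomposes
$$
x = y - z, \qquad y, z \in X_+, \qquad \|y\| + \|z\| \le \lambda \|x\|.
$$
Second, because $P$ is Markov, the defining functional is $P$-invariant, i.e.\ $f \circ P = f$: indeed $P(\ck) \subseteq \ck$ gives $f(Pw) = 1 = f(w)$ for $w \in \ck$, and this extends to $X_+ = \br_+ \cdot \ck$ by positive homogeneity and then to all of $X = X_+ - X_+$ by linearity. In particular $x \in N_P$ forces $f(x) = f(Px) = 0$, so $f(y) = f(z)$; this common nonnegative value will serve as $\alpha = \alpha(x)$.

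The construction of $u, v$ then splits into two cases. If $\alpha = 0$, then as the norm agrees with $f$ on $X_+$, both $y$ and $z$ vanish, hence $x = 0$ and the conclusion is immediate (take any $u = v \in \ck$). If $\alpha > 0$, set $u := \alpha^{-1} y$ and $v := \alpha^{-1} z$; both lie in $\ck$ since $f(u) = f(v) = 1$, and by construction $x = \alpha(u - v)$. Because $N_P$ is a linear subspace of $X$ containing $x$, the element $u - v = \alpha^{-1} x$ automatically lies in $N_P$. Finally, $\|y\| = f(y) = \alpha = f(z) = \|z\|$ turns the generating inequality $\|y\| + \|z\| \le \lambda \|x\|$ into $2\alpha \le \lambda \|x\|$, yielding $\alpha(x) \le \frac{\lambda}{2}\|x\|$.

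I expect no genuine obstacle; the real work is packaged into the two input facts. The only subtle point is that, in the present setting where the convex hull of $\ck \cup (-\ck)$ need not be radially compact, $X_+$ is only $\lambda$-generating for $\lambda > 1$ (not for $\lambda = 1$), which is precisely why the bound carries the factor $\lambda/2$ and not the sharper $1/2$ one would obtain in a strong abstract state space setting by Theorem \ref{Yo}.
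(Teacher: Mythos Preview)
The paper does not give a proof of this lemma; it is quoted from \cite{MA} without argument. Your proof is correct and is the natural one: use the $\lambda$-generating property of $X_+$ (equivalent to $X$ being an abstract state space, as recorded in the paper) to write $x=y-z$ with $\|y\|+\|z\|\le\lambda\|x\|$, invoke $P^*f=f$ (which the paper notes holds for any Markov operator) to conclude $f(y)=f(z)=:\alpha$, and then normalize. The observation that $u-v=\alpha^{-1}x$ automatically lies in $N_P$ because $N_P=\ker P$ is a subspace is exactly the point, and your final remark explaining why the bound carries $\lambda/2$ rather than $1/2$ in the non-strong setting is correct and matches the paper's emphasis on working without radial compactness.
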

%
%
%
%

\section{Left Consistent Projections}

In this section, we are going to study a relation between
projections of $X$.

\begin{Definition}
 Let $P$ and $Q$ be projections on $X$. We say that $P$ is \textit{left consistent} by $Q$, and denoted by $P\leq^\ell Q $, if
 $PQ=P$.
\end{Definition}

\begin{Proposition}
The relation $\leq^\ell$ has the following properties:
\begin{itemize}
\item[(i)] $\leq^\ell$ is reflexive; \item[(ii)] $\leq^\ell$ is
transitive.
\end{itemize}
\end{Proposition}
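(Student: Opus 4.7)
The statement is a routine verification of two algebraic identities from the definition $P\leq^\ell Q \iff PQ=P$, together with the fact that projections satisfy $P^2=P$. I would present it as two short arguments.

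For reflexivity, the plan is simply to observe that since $P$ is a projection, $P^2=P$, which by definition of $\leq^\ell$ means $P\leq^\ell P$. There is nothing subtle here.

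For transitivity, suppose $P\leq^\ell Q$ and $Q\leq^\ell R$, so that $PQ=P$ and $QR=Q$ by definition. The plan is to compute $PR$ by inserting $Q$: using associativity of operator composition,
\[
PR=(PQ)R=P(QR)=PQ=P,
\]
which gives $P\leq^\ell R$ as required.

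There is no real obstacle: both parts follow directly from associativity and the definition, with no need to invoke any structure of the abstract state space $(X,X_+,\ck,f)$ beyond the fact that $P,Q,R$ are linear operators on $X$. I would write the proof in essentially four lines total.
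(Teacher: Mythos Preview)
Your proposal is correct and matches the paper's own proof essentially line for line: the paper declares (i) obvious and for (ii) computes $P_1P_3=P_1P_2P_3=P_1P_2=P_1$, which is exactly your associativity argument with different labels.
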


\begin{proof}
(i) is obvious. To prove (ii), assume that $P_1\leq^\ell P_2$ and
$P_2\leq^\ell P_3$, which implies that $P_1P_2=P_1$ and
$P_2P_3=P_2$. Then
\[P_1P_3=P_1P_2P_3=P_1P_2=P_1,\]
and hence $P_1\leq^\ell P_3$.
\end{proof}

In what follows, one needs the following property of $\d_P(T)$
with the relation $\leq^\ell$.

\begin{Proposition}\label{PQl}
Let $T:X\to X$ be a linear bounded operator. If $P$ and $Q$ are
two projections on $X$ such that $P\leq^\ell Q$, then $\d_Q(T)\leq
\d_P(T)$.
\end{Proposition}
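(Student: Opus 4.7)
The plan is to reduce the inequality $\delta_Q(T)\leq \delta_P(T)$ to a straightforward inclusion of the defining supremum sets $N_Q$ and $N_P$. Recall from \eqref{Dbp} and \eqref{Np} that
\[
\delta_P(T)=\sup_{x\in N_P,\ x\neq 0}\frac{\|Tx\|}{\|x\|}, \qquad N_P=\{x\in X:\ Px=0\},
\]
and analogously for $Q$. So it is enough to establish $N_Q\subseteq N_P$; once this inclusion is in hand, the supremum defining $\delta_P(T)$ is taken over a set containing the one defining $\delta_Q(T)$, and the desired inequality follows immediately.

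The key step is the inclusion $N_Q\subseteq N_P$. Take any $x\in N_Q$, so that $Qx=0$. Using the hypothesis $P\leq^\ell Q$, i.e. $PQ=P$, I would compute
\[
Px = (PQ)x = P(Qx) = P(0) = 0,
\]
hence $x\in N_P$.

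With $N_Q\subseteq N_P$ established, I would conclude with the standard observation that enlarging the index set in a supremum cannot decrease its value:
\[
\delta_Q(T)=\sup_{x\in N_Q,\ x\neq 0}\frac{\|Tx\|}{\|x\|}\;\leq\;\sup_{x\in N_P,\ x\neq 0}\frac{\|Tx\|}{\|x\|}=\delta_P(T).
\]
There is no genuine obstacle here; the only thing to verify carefully is the set-theoretic inclusion, which is a one-line consequence of the left consistency hypothesis $PQ=P$. The degenerate case $P=I$ (where the convention $\delta_P(T)=1$ is in force) is easily handled separately, since then $P\leq^\ell Q$ forces $Q=I$ as well via $I=PQ=Q$, so both coefficients equal $1$ by convention.
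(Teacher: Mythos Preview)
Your proof is correct and follows exactly the same approach as the paper: establish $N_Q\subseteq N_P$ from $PQ=P$ via $Px=PQx=0$, then conclude by monotonicity of the supremum. The only addition is your explicit treatment of the degenerate case $P=I$, which the paper omits.
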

\begin{proof}
Assume that $P\leq^\ell Q$. Then for every $x\in N_Q$ we get
$Px=PQx=0$, therefore $N_Q\subseteq N_P$ which yields the desired
inequality.
\end{proof}

A sequence $\{P_n\}_{n=1}^\infty$ of projections of $X$ is called
\textit{left decreasing}, if $P_{n+1}\leq^\ell P_n$, for all $n\in
\bn$, i.e. $P_{n+1}$ is left consistent by $P_n$.

 \begin{example}\label{LCE1}
 Let $(X,X_+,\ck,f)$ be an abstract state space and let $\{z_n\}$ be a sequence in $\ck$ such that $z_n\rightarrow z$. Construct the one dimensional projection $P_n:=T_{z_n}$, i.e. $P_nx=f(x)z_n$. Then
 $\{P_n\}$ is a left decreasing sequence of projections, indeed
 \begin{eqnarray*}
   P_{n+1}P_nx &=& P_{n+1}(f(x)z_n) \\
    &=& f(x)P_{n+1}(z_n) \\
    &=& f(x)f(z_n)z_{n+1} \\
    &=& f(x)z_{n_+1}\\
    &=& P_{n+1}(x).
 \end{eqnarray*}
 Moreover $P_n\rightarrow P=T_z$
 \end{example}
\noindent  In particular, we consider the following example:

\begin{example}
Consider the space $\ell_1$, and recall that $\ck=\{x\in \ell_1 ;\ \sum_{n=1}^\infty x _n=1, x_n\geq 0\}$. For all $n\in \bn$, define
\[z_n=\left (\frac{1}{2},\frac{1}{2^2}, \frac{1}{2^3}, \ldots \frac{1}{2^n}, \frac{1}{2^n}, 0, 0, \ldots 0\right ).\]
Then it is clear that $z_n\in \ck$ and $z_n\rightarrow z=\left (\frac{1}{2},\frac{1}{2^2}, \frac{1}{2^3}, \ldots  \right ) $, as $\|z_n-z\|=\sum_{k=n+1}^\infty \frac{1}{2^k}\rightarrow 0$. Due to the previous example, the projections $P_n=T_{z_n}$ form a left decreasing sequence.

\end{example}

Next result gives some important properties of left consistent
sequences of projections.

\begin{Lemma}\label{PP}
 Let $\{P_n\}_{n=1}^\infty$ be a sequence of projections of $X$.
 Then the following statements hold:
 \begin{itemize}
\item[(i)]  if $\{P_n\}_{n=1}^\infty$ is left decreasing, then
$P_m\leq^\ell P_k$, for all $m\geq k$; \item[(ii)] if
$\{P_n\}_{n=1}^\infty$ is left decreasing and $P_n\rightarrow P$
in norm, as $n\rightarrow \infty$, then $P$ is a projection and
$P\leq^\ell P_k$, for all $k\in \bn$.
 \end{itemize}
\end{Lemma}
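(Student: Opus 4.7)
My plan is to handle the two parts separately; both reduce to elementary manipulations, with the main subtlety being the passage to the limit in part (ii).

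For part (i), I would simply iterate the hypothesis. Since $P_{n+1}\leq^\ell P_n$ for every $n$ and the relation $\leq^\ell$ is transitive (by the Proposition preceding this lemma), a straightforward induction on $m-k$ gives $P_m\leq^\ell P_k$ whenever $m\geq k$. Explicitly, if $P_{m-1}\leq^\ell P_k$ is already known and $P_m\leq^\ell P_{m-1}$, then $P_m P_k = P_m P_{m-1} P_k = P_m P_{m-1} = P_m$.

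For part (ii), two things must be checked: that $P$ is a projection and that $PP_k=P$ for all $k$. Since $\{P_n\}$ converges in norm it is bounded, say $\|P_n\|\leq M$; by the usual trick
\[
\|P_n^2 - P^2\|\leq \|P_n\|\,\|P_n - P\| + \|P_n - P\|\,\|P\|\leq (M+\|P\|)\,\|P_n-P\|,
\]
the sequence $P_n^2 = P_n$ converges in norm to $P^2$ as well as to $P$, so $P^2 = P$. For the left-consistency statement, fix $k$; by part (i), $P_m P_k = P_m$ for all $m\geq k$. Letting $m\to\infty$ and using continuity of right multiplication by the fixed bounded operator $P_k$, the left-hand side converges to $PP_k$ while the right-hand side converges to $P$, giving $PP_k=P$, i.e.\ $P\leq^\ell P_k$.

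I do not expect a serious obstacle: the only place that requires any care is the passage to the limit, which just needs the boundedness of the convergent sequence $\{P_n\}$ to ensure continuity of operator multiplication in the norm topology. No use of the abstract state space structure is needed beyond having a Banach space of bounded operators.
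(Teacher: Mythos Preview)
Your proof is correct and follows essentially the same route as the paper. For (i) the paper writes out the telescoping chain $P_mP_k=P_mP_{m-1}P_k=\cdots=P_m$ explicitly, which is precisely your inductive use of transitivity unwound; for (ii) the paper simply declares it ``obvious'', while you supply the standard limiting argument (boundedness of a norm-convergent sequence and continuity of multiplication) that justifies it.
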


\begin{proof}
(i)  Assume that $\{P_n\}_{n=1}^\infty$ is a left decreasing sequence. Then for all $m\geq k$, we have
  \begin{eqnarray*}
    P_mP_k &=& P_mP_{m-1}P_k \\
 &=&  P_mP_{m-1}P_{m-2}P_k   \\
 & \vdots & \\
 &=& P_mP_{m-1}P_{m-2}\ldots P_{k+1}P_k \\
 &=& P_mP_{m-1}P_{m-2}\ldots P_{k+1} \\
 & \vdots & \\
 &=&  P_mP_{m-1} \\
     &=& P_m.
  \end{eqnarray*}
  (ii) is obvious.
\end{proof}

%
%
%
%

\section{Uniform $P$-ergodicity  and weak $P$-ergodicity of NDMC}

In this section, we discuss some relations between weak and uniform
$P$-ergodicities of NDMC. The following result show that weak
$P$-ergodicity is indeed weaker that the uniform $P$-ergodicity.


\begin{Proposition} Let $X$ be an abstract state space. Then every uniformly $P$-ergodic NDMC is weakly $P$-ergodic.
\end{Proposition}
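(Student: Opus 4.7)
The plan is to reduce weak $P$-ergodicity to a one-line application of the Lipschitz-type estimate in Theorem \ref{MA1-Thr. 3.7}(ii), once we recognize that the coefficient $\delta_P$ vanishes at the projection $P$ itself.

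First, I would observe directly from the definition \eqref{Dbp} that $\delta_P(P)=0$: indeed, for every $x\in N_P$ we have $Px=0$ by \eqref{Np}, so the supremum defining $\delta_P(P)$ is over zero quantities. (Strictly speaking, one should mention this is the nontrivial case $P\neq I$, since the definition sets $\delta_I=1$; but the projection under consideration in the definition of uniform/weak $P$-ergodicity is assumed to be a nontrivial Markov projection.)

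Next, I would fix $m\geq 0$ and apply part (ii) of Theorem \ref{MA1-Thr. 3.7} with $T=T^{m,n}$ and $S=P$ to obtain
\begin{equation*}
\bigl|\delta_P(T^{m,n})-\delta_P(P)\bigr|\leq \|T^{m,n}-P\|.
\end{equation*}
Combining this with $\delta_P(P)=0$ and non-negativity of $\delta_P$ (Theorem \ref{MA1-Thr. 3.7}(i)) yields
\begin{equation*}
\delta_P(T^{m,n})\leq \|T^{m,n}-P\|.
\end{equation*}

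Finally, by the assumed uniform $P$-ergodicity, $\|T^{m,n}-P\|\to 0$ as $n\to\infty$ for every $m\geq 0$. The sandwich above then forces $\delta_P(T^{m,n})\to 0$, which is precisely weak $P$-ergodicity. There is no genuine obstacle here; the only thing to take care of is the small verification that $\delta_P(P)=0$, after which the proof is a direct invocation of the continuity estimate previously established.
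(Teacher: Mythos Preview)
Your argument is correct. The paper's proof takes a slightly different route: instead of invoking part~(ii) of Theorem~\ref{MA1-Thr. 3.7} together with the observation $\delta_P(P)=0$, it uses part~(iii), bounding $\delta_P(T^{k,n})$ by $\tfrac{\lambda}{2}\sup\|T^{k,n}u-T^{k,n}v\|$ over $u,v\in\mathcal{K}$ with $Pu=Pv$, then inserting $-Pu+Pv=0$ and applying the triangle inequality to get $\delta_P(T^{k,n})\leq\lambda\|T^{k,n}-P\|$. Both arguments are essentially one-step applications of Theorem~\ref{MA1-Thr. 3.7}, but yours is a bit cleaner: it avoids the detour through the base $\mathcal{K}$ and the constant $\lambda$, yielding the sharper estimate $\delta_P(T^{m,n})\leq\|T^{m,n}-P\|$ directly. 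The only point to be careful about is that part~(ii) is stated for $T,S\in\Sigma(X)$, so one should note that uniform $P$-ergodicity forces $P$ to be Markov (as remarked just after the definition in the paper), making $S=P$ admissible.
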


\begin{proof}
By Theorem \ref{MA1-Thr. 3.7}(iii), we have
\begin{eqnarray*}
\d_P(T^{k,n}) &\leq& \frac{\l}{2}\sup \|T^{k,n}u-T^{k,n}v\| \ \ \ (\ u,v\in \ck,\ \text{and}\ Pu=Pv)\\
&=& \frac{\l}{2}\sup \|T^{k,n}u-Pu+Pv - T^{k,n}v\|\\
&\leq & \frac{\l}{2}( \sup \|T^{k,n}u-Pu\|+ \sup \|T^{k,n}v-Pv \|)\\
&\leq & \l \|T^{k,n}-P\|\rightarrow 0,\ \text{as}\ n\rightarrow
\infty
\end{eqnarray*}
and hence $\d_P(T^{k,n})\rightarrow 0$ as $n\rightarrow \infty$, which completes the proof.
\end{proof}

Before discussing the reverse direction, let us consider some
examples of uniform $P$-ergodic NDMC.

\begin{example}
 Let $(X,X_+,\ck,f)$ be an abstract state space. Let $\{\tilde{T}_n\}_{n=1}^\infty$ be a sequence of Markov operators such that
 $\bigcap\limits_{n=1}^\infty Fix(\tilde{T}_n)\neq \emptyset$
 (here $Fix(T)=\{x\in\ck: \ Tx=x\}$).
 For a given $a\in (0,1)$ and $z_0\in \ck$ with $z_0\in \bigcap\limits_{n=1}^\infty Fix(\tilde{T}_n)$, define the operators
 \[ T_n=aT_{z_0}+ (1-a)\tilde{T}_n. \]
 One can see that
 \begin{eqnarray*}
   T_nT_m &=& (aT_{z_0}+ (1-a)\tilde{T}_n)( aT_{z_0}+ (1-a)\tilde{T}_m) \\
    &=& a^2T_{z_0} + a(1-a)T_{z_0}\tilde{T}_m + a(1-a)\tilde{T}_nT_{z_0} + (1-a)^2\tilde{T}_n\tilde{T}_m \\
    &=& a^2T_{z_0} + a(1-a)T_{z_0} + a(1-a)T_{z_0}+ (1-a)^2\tilde{T}_n\tilde{T}_m \\
    &=& (1-(1-a)^2)T_{z_0}+ (1-a)^2\tilde{T}_n\tilde{T}_m.
 \end{eqnarray*}
 Hence, for all $k<n$
 \begin{equation*}
   T^{k,n} = T_nT_{n-1}\ldots T_{k+1}= (1-(1-a)^{n-k})T_{z_0}+ (1-a)^{n-k}\tilde{T}_n\ldots \tilde{T}_{k+1}.
 \end{equation*}
    Therefore,
   \begin{equation*}
  \| T^{k,n}-T_{z_0}\| \leq  (1-a)^{n-k} \|T_{z_0}-\tilde{T}_n\ldots \tilde{T}_{k+1}\|\leq 2(1-a)^{n-k}  \to 0,
 \end{equation*}
as $n\to \infty$, which proves that $\{T^{k,n}\}$ is uniformly $P$-ergodic, where $P=T_{z_0}$.
\end{example}

It is interesting to find some conditions which together with weak
$P$-ergodicity of NDMC imply its uniform $P$-ergodicity.

\begin{Theorem}\label{4.1} Let $(X,X_+,\ck,f)$ be an abstract state space and $\{T_n\}$ be
a generating sequence of NDMC. Let $\{P_n\}$ be a sequence of
projections of $X$ such that
\begin{enumerate}
\item[(i)] $T_nP_n=P_nT_n=P_n, \forall n\in \mathbb{N}$,
\item[(ii)] $\{P_n\}$ is left decreasing sequence of projections,
 \item[(iii)] $\sum_{n=k}^\infty\|P_{n+1}-P_n\|\rightarrow 0$, as $k\rightarrow
\infty$. \end{enumerate}
 If $\{T^{k,n}\}$ is weakly
$P$-ergodic, then it is uniformly $P$-ergodic.
\end{Theorem}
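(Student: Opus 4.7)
The plan is, for a fixed $m\in\bn$, to show that $\|T^{m,n}-P\|\to 0$ as $n\to\infty$, where $P$ is the norm limit of $\{P_n\}$. First, condition (iii) forces $\sum_{n=1}^\infty\|P_{n+1}-P_n\|<\infty$, so $\{P_n\}$ is norm-Cauchy and converges to some operator $P$; by Lemma \ref{PP}(ii), $P$ is a projection and $P\leq^\ell P_k$ for every $k$. This $P$ is exactly the limiting projection relative to which weak (and, we want, uniform) ergodicity is understood.

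The first key algebraic observation is the identity $PT_n=P$ for all $n$. Indeed, $PP_n=P$ by left consistency and $P_nT_n=P_n$ by (i), so
$$
PT_n \;=\; (PP_n)T_n \;=\; P(P_nT_n) \;=\; PP_n \;=\; P.
$$
Iterating gives $PT^{m,k}=P$ for all $m\leq k$, which yields the factorization
$$
T^{m,n}-P \;=\; T^{k,n}T^{m,k} - PT^{m,k} \;=\; (T^{k,n}-P)\,T^{m,k}, \qquad m\leq k\leq n.
$$
Since $T^{m,k}$ is Markov, $\|T^{m,n}-P\|\leq \|T^{k,n}-P\|$. It therefore suffices to show $\|T^{k,n}-P\|$ can be made small by choosing $k$ large and then $n\geq k$ large.

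To estimate $\|T^{k,n}-P\|$, I use the decomposition
$$
T^{k,n}-P \;=\; T^{k,n}(I-P_k) \;+\; \bigl(T^{k,n}P_k-P_n\bigr) \;+\; (P_n-P).
$$
For the first term, $P(I-P_k)=P-PP_k=0$, so Theorem \ref{MA1-Thr. 3.7}(v) gives $\|T^{k,n}(I-P_k)\|\leq \d_P(T^{k,n})\,\|I-P_k\|$, which tends to $0$ as $n\to\infty$ for each fixed $k$ by weak $P$-ergodicity. For the middle term, I plan a telescoping argument based on rewriting $T_{j+1}P_j = T_{j+1}(P_j-P_{j+1}) + P_{j+1}$ at each step (using (i)), yielding
$$
T^{k,n}P_k - P_n \;=\; \sum_{j=k}^{n-1} T^{j,n}(P_j - P_{j+1}),
$$
whose norm is at most $\sum_{j=k}^{\infty}\|P_{j+1}-P_j\|$ since each $T^{j,n}$ is Markov; the last term $\|P_n-P\|$ is bounded by the same tail for $n\geq k$. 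By condition (iii), this tail vanishes as $k\to\infty$.

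Given $\varepsilon>0$, I therefore first pick $k\geq m$ so that $\sum_{j=k}^{\infty}\|P_{j+1}-P_j\|<\varepsilon/3$, handling the second and third terms, and then pick $N\geq k$ so that $\d_P(T^{k,n})\,\|I-P_k\|<\varepsilon/3$ for all $n\geq N$, handling the first. Combined with $\|T^{m,n}-P\|\leq\|T^{k,n}-P\|$, this gives $\|T^{m,n}-P\|<\varepsilon$ for $n\geq N$, completing the proof. The main subtlety is the order of quantifiers: the telescoping error $\sum_{j=k}^{\infty}\|P_{j+1}-P_j\|$ is controlled by $k$, not by $n$, so without the reduction enabled by the identity $PT^{m,k}=P$, one would be saddled with the fixed residual $\sum_{j=m}^{\infty}\|P_{j+1}-P_j\|$ and could not push it to zero.
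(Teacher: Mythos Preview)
Your proof is correct and follows essentially the same approach as the paper: both derive $PT_n=P$ from left consistency, both use the telescoping identity for $T^{k,n}P_{k}-P_n$ (the paper starts at $P_{k+1}$, a harmless index shift) bounded by the tail $\sum_{j\geq k}\|P_{j+1}-P_j\|$, and both invoke Theorem~\ref{MA1-Thr. 3.7}(v) to produce a $\d_P(T^{k,n})$ factor before choosing $k$ large and then $n$ large. The only cosmetic difference is that you first reduce via $\|T^{m,n}-P\|\leq\|T^{k,n}-P\|$ and then split $T^{k,n}-P$ into three pieces (applying (v) with $H=I-P_k$), whereas the paper splits $T^{m,n}-P_n$ directly into two pieces (applying (v) with $H=T^{m,k_0-1}-P_{k_0+1}$, which conveniently has norm $\leq 2$); the underlying argument is the same.
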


\begin{proof}
From the hypotheses (iii) and using the standard argument, one
finds that $P_n\rightarrow P$ (in norm), where $P$ is a
projection. Due to (ii) Lemma \ref{PP} we have $P\leq^\ell P_k$,
for all $k\in \bn$. As
\[ \|T^{m,n}-P \|\leq \|T^{m,n}-P_{n} \| +  \|P_{n}-P \|\]
for $m<n$, it is enough to prove that $\|T^{m,n}-P_{n}
\|\rightarrow 0$, as $n\rightarrow \infty$.

According to (i), one has
\begin{eqnarray}\label{00Eqn4}
  T^{k,n}P_{k+1} &=& T^{k+1,n}T_{k+1}P_{k+1} \nonumber \\
   &=& T^{k+1,n}P_{k+1} \nonumber \\
   &=& T^{k+1,n}(P_{k+1}-P_{k+2})+ T^{k+1,n}P_{k+2}\nonumber \\
   & \vdots & \nonumber\\
   &=& \sum_{l=k+1}^{n-1}T^{l,n}(P_l-P_{l+1})+ P_{n}.
\end{eqnarray}
For any $\i>0$, and by (iii), there is $k_0\in\bn$ (without lost
of generality we may assume that $k_0>m$) such that
\begin{equation}\label{0Eqn4}
\sum_{l=k_0}^{\infty}\| P_l-P_{l+1}\|<\i .
\end{equation}
Therefore, from \eqref{00Eqn4} and \eqref{0Eqn4}, we have
\begin{equation}\label{Eqn4}
\| T^{k_0,n}P_{k_0+1}- P_{n}\| \leq
\sum_{l=k_0}^{n-1}\|T^{l+1,n}(P_l-P_{l+1})\| \leq
\sum_{l=k_0}^{\infty}\| P_l-P_{l+1}\| <\i.
\end{equation}
For all $n\in \bn$, and by (i), one finds
\begin{equation}\label{Eqn5}
 PT_n=PP_nT_n=PP_n=P,
 \end{equation}
which implies that $PT^{m,k_0-1}=P$, for all $m<k_0$, and hence
$P(T^{m,k_0-1}-P_{k_0+1})=0$. So, (v) of Theorem (\ref{MA1-Thr.
3.7}) implies
\begin{equation}\label{Eqn6}
 \| T^{k_0,n}(T^{m,k_0-1}-P_{k_0+1})\|\leq \d_P(T^{k_0,n} )\| T^{m,k_0-1}-P_{k_0+1} \|\leq 2\d_P(T^{k_0,n}).
\end{equation}
Due to the weak $P$-ergodicity of $\{T^{k,n}\}$, there is
$N_0\in\bn$ such that $\d_P(T^{k_0,n})<\i$ for all $n\geq N_0$.

From \eqref{Eqn4} and \eqref{Eqn6}, we obtain
\begin{eqnarray*}
  \|T^{m,n}-P_{n} \| &\leq & \|T^{k_0,n}T^{m,k_0-1}-T^{k_0,n}P_{k_0+1} \| + \| T^{k_0,n}P_{k_0+1}-P_{n}\| \\
   &=& \|T^{k_0,n}(T^{m,k_0-1}-P_{k_0+1}) \| + \| T^{k_0,n}P_{k_0+1}-P_{n}\|  \\
   &\leq & 2\d_P(T^{k_0,n}) + \i\\
   &\leq& 3\i , \ \ \  \ \textrm{for all} \ \ n\geq N_0
\end{eqnarray*}
hence, we deduce that $\|T^{m,n}-P\|\rightarrow 0$, as $n\rightarrow \infty$ which completes the proof.
\end{proof}

\begin{remark}
We note that if the generating sequence $\{T_{n}\}$ is stationary
(i.e. $T_{n}=T, \forall n\in \bn$), then the corresponding NDMC
reduces to a homogeneous chain. If for some projection $P$ with
$PT=TP=P$ and $P_{n}=P$, for all $n\in \bn$, then the conditions
of Theorem \ref{4.1} are satisfied, hence we obtain the
equivalence of the weak and uniform $P$-ergodicities for
homogeneous Markov chain $\{T^{n}\}$. This result has been proven
in \cite{MA} (namely by Corollary 4.7 and Proposition 4.10).
However, it is known that even in the case of $L^{1}$-spaces, the
weak $P$-ergodicity does not imply uniform $P$-ergodicity for NDMC
(see \cite{Paz}).
\end{remark}

\noindent Now, let us recall the following lemma which will be used to prove Theorem \ref{4.2}:
\begin{Lemma}\cite{M01}\label{F-Lemma-4.5}
Let $\{a_{j,n}\}$ be a sequence of real numbers such that
\begin{eqnarray*}\label{4a1}
&& 0\leq a_{j,n}\leq 1, \ \ \textrm{for all} \ \ j,n\in\bn,\\
\label{4a2} && a_{j,n}\leq a_{j,m}a_{m+1,n} \ \  \textrm{for all}
\ \ j\leq m\leq n.
\end{eqnarray*}
If there is a constant $K>0$ such that
\begin{equation*}\label{4a3}
\sum_{j=1}^n a_{j,n}\leq K \ \ \textrm{for all} \ \ n\in\bn,
\end{equation*}
then for each $j$ one has $a_{j,n}\to 0$ as $n\to\infty$.
\end{Lemma}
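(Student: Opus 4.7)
The plan is to exploit the submultiplicative hypothesis, combined with the normalization $a_{j,j}\leq 1$, to force the finite sequence $a_{1,n},a_{2,n},\ldots,a_{n,n}$ to be \emph{nondecreasing} in $j$ for each fixed $n$; the uniform bound $\sum_{j=1}^{n}a_{j,n}\leq K$ will then yield a $1/n$-type decay rate automatically, from which the conclusion is immediate.

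First I would specialize the inequality $a_{j,n}\leq a_{j,m}a_{m+1,n}$ to the case $m=j$ (legal whenever $j<n$), which gives $a_{j,n}\leq a_{j,j}\,a_{j+1,n}\leq a_{j+1,n}$, the second inequality using $a_{j,j}\leq 1$. A trivial induction on $i$ then upgrades this to $a_{j,n}\leq a_{i,n}$ for all $j\leq i\leq n$. Next, for any fixed $j$ and any $n>j$, this monotonicity combines with the sum hypothesis to yield
$$K \;\geq\; \sum_{i=1}^{n}a_{i,n} \;\geq\; \sum_{i=j}^{n}a_{i,n} \;\geq\; (n-j+1)\,a_{j,n},$$
so $a_{j,n}\leq K/(n-j+1)\to 0$ as $n\to\infty$, which is exactly the claim.

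The only step that requires any real insight is spotting that the $m=j$ case of the submultiplicative hypothesis already forces monotonicity in the first index; once that is noticed, the rest is a one-line counting argument, and in fact the full strength of the submultiplicative hypothesis (for general $m$) is never used. So there is no genuine obstacle here, beyond recognising this monotonicity trick.
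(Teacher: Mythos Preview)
Your argument is correct. The specialization $m=j$ gives $a_{j,n}\leq a_{j,j}\,a_{j+1,n}\leq a_{j+1,n}$ for $j<n$, hence $j\mapsto a_{j,n}$ is nondecreasing on $\{1,\dots,n\}$, and then the bound $\sum_{i=1}^n a_{i,n}\leq K$ immediately yields $a_{j,n}\leq K/(n-j+1)$. There is nothing to object to.

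As for the comparison: the paper does \emph{not} supply its own proof of this lemma; it is quoted from \cite{M01} and used as a black box in the proof of Theorem~\ref{4.2}. So there is no in-paper argument to set yours against. Your self-contained proof is short enough that it could simply be inserted in place of the citation. It is worth remarking, as you do, that only the $m=j$ instance of the submultiplicative hypothesis is needed, and in fact you obtain the quantitative rate $a_{j,n}\leq K/(n-j+1)$, which is stronger than the bare $a_{j,n}\to 0$ asserted in the lemma.
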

\begin{Theorem}\label{4.2} Let $(X,X_+,\ck,f)$ be an abstract state space and $\{T_n\}$ be
a generating sequence of the NDMC $\{T^{m,n}\}$. Let $\{P_n\}$ be
a left decreasing sequence of projections of $X$ such that
$T_nP_n=P_nT_n=P_n$, for all $n\in \mathbb{N}$, and
$P_n\rightarrow P$ in norm. If there exists a constant $C>0$ with
\begin{equation}\label{Eq.4}
  \sum_{l=1}^{n-1}\d_P(T^{l,n})\leq C,\ \forall n\in \mathbb{N},
\end{equation}
 then $\{T^{k,n}\}$ is
uniformly $P$-ergodic.
\end{Theorem}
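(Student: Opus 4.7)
The plan is to first extract weak $P$-ergodicity from the summability hypothesis via Lemma \ref{F-Lemma-4.5}, and then upgrade this to uniform $P$-ergodicity through a telescoping decomposition analogous to \eqref{00Eqn4} in the proof of Theorem \ref{4.1}.

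For the first step, I would set $a_{j,n}=\d_P(T^{j,n})$ for $j<n$ (with $a_{n,n}:=1$) and verify the three hypotheses of Lemma \ref{F-Lemma-4.5}. Boundedness $0\leq a_{j,n}\leq 1$ is immediate from Theorem \ref{MA1-Thr. 3.7}(i), and $\sum_{j=1}^{n-1}a_{j,n}\leq C$ is given. The decisive point is submultiplicativity $a_{j,n}\leq a_{j,m}a_{m+1,n}$, which I would derive from Proposition \ref{dp12-less-1-2} once $T^{j,m}(N_P)\subseteq N_P$ is verified. To that end, Lemma \ref{PP}(ii) gives $P\leq^\ell P_n$ for every $n$, so $PP_n=P$; combined with $P_nT_n=P_n$ this yields $PT_n=PP_nT_n=PP_n=P$, and by induction $PT^{j,m}=P=PT^{j,m}P$, which is exactly the criterion of Lemma \ref{TNp-in-Npmeans}. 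Lemma \ref{F-Lemma-4.5} then delivers $\d_P(T^{j,n})\to 0$ as $n\to\infty$ for every fixed $j$, i.e.\ the NDMC is weakly $P$-ergodic.

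For the second step, writing $T^{m,n}=T^{m,n}(I-P_{m+1})+T^{m,n}P_{m+1}$ and iterating via $T_lP_l=P_l$ exactly as in \eqref{00Eqn4} yields $T^{m,n}P_{m+1}=\sum_{l=m+1}^{n-1}T^{l,n}(P_l-P_{l+1})+P_n$; combining with $T^{m,n}-P=(T^{m,n}-P_n)+(P_n-P)$ one gets
\[
T^{m,n}-P=T^{m,n}(I-P_{m+1})+\sum_{l=m+1}^{n-1}T^{l,n}(P_l-P_{l+1})+(P_n-P).
\]
Since $P\leq^\ell P_{m+1}$ and $P\leq^\ell P_l$ for every $l$, both $P(I-P_{m+1})=0$ and $P(P_l-P_{l+1})=0$, so Theorem \ref{MA1-Thr. 3.7}(v) supplies $\|T^{m,n}(I-P_{m+1})\|\leq\d_P(T^{m,n})\|I-P_{m+1}\|$ and $\|T^{l,n}(P_l-P_{l+1})\|\leq\d_P(T^{l,n})\|P_l-P_{l+1}\|$. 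The first factor and $\|P_n-P\|$ both tend to zero by Step 1 and by hypothesis, respectively.

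To control the remaining sum, given $\i>0$ I would use $P_l\to P$ to choose $L>m$ with $\|P_l-P_{l+1}\|<\i/C$ for $l\geq L$, and split into a finite head $\sum_{l=m+1}^{L-1}\d_P(T^{l,n})\|P_l-P_{l+1}\|$ (each of whose terms vanishes as $n\to\infty$ by Step 1, because $l<L$ is fixed) and a tail $\sum_{l=L}^{n-1}\d_P(T^{l,n})\|P_l-P_{l+1}\|\leq (\i/C)\sum_{l=L}^{n-1}\d_P(T^{l,n})\leq\i$ by the summability hypothesis. Letting first $n\to\infty$ and then $\i\to 0$ completes the proof that $\|T^{m,n}-P\|\to 0$. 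The main obstacle is the submultiplicativity in Step 1, which requires carefully combining the left-consistency $P\leq^\ell P_n$ with the absorption condition $P_nT_n=P_n$ to conclude $T^{j,m}(N_P)\subseteq N_P$; once this is in hand, both the application of Lemma \ref{F-Lemma-4.5} and the tail-splitting argument reduce to routine bookkeeping.
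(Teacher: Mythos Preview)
Your proposal is correct and follows essentially the same two-step strategy as the paper: first extract weak $P$-ergodicity from Lemma~\ref{F-Lemma-4.5}, then upgrade via a telescoping decomposition and a head/tail split of the resulting sum, using the bound~\eqref{Eq.4} for the tail and weak $P$-ergodicity for the finite head.

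The only notable difference is the form of the telescoping. You reuse the forward identity~\eqref{00Eqn4} from Theorem~\ref{4.1}, writing $T^{m,n}-P=T^{m,n}(I-P_{m+1})+\sum_{l=m+1}^{n-1}T^{l,n}(P_l-P_{l+1})+(P_n-P)$, whereas the paper performs a backward recursion starting from $\|T^{m,n}-P_n\|=\|T_nT^{m,n-1}-P_n\|$ and peeling off one factor at a time to arrive at an estimate of the shape $\|T_m-P_m\|\,\d_P(T^{m+1,n})+\sum_{\ell}\|P_{\ell-1}-P_\ell\|\,\d_P(T^{\ell,n})$. Both decompositions produce the same type of sum and are handled by the same head/tail splitting; your route has the advantage of directly recycling~\eqref{00Eqn4} rather than rederiving a variant. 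Your verification of the submultiplicativity $a_{j,n}\leq a_{j,m}a_{m+1,n}$ via $PT^{j,m}=P$ and Lemma~\ref{TNp-in-Npmeans}/Proposition~\ref{dp12-less-1-2} is in fact cleaner than the paper's appeal to ``$PT_n=T_nP$'', since the hypotheses only directly yield $PT_n=P$.
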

\begin{proof}
For $j,n\in\bn$, put $a_{j,n}=\d_P(T^{j,n})$, and using Equation (\ref{Eqn5}), we have $PT_n=T_nP$. Then by Lemma
\ref{F-Lemma-4.5} and the hypothesis \eqref{Eq.4},  we find that
$\{T^{k,n}\}$ is weakly $P$-ergodic.

As $P_n\rightarrow P$, for any $\i>0$, there is $N_1\in\bn$ such that
\begin{equation}\label{422}
\|P_n-P\|<\i  \ \ \ \textrm{for all} \ \ n\geq N_1.
\end{equation}

One can see that
\begin{equation}\label{423}
\|T^{m,n}-P\|\leq \|T^{m,n}-P_{n}\|+\|P_{n}-P\|.
\end{equation}

\noindent Due to \eqref{422} it is enough to estimate
$\|T^{m,n}-P_{n+1}\|$. By \eqref{Eqn5}, one gets the following:
\begin{eqnarray*}\label{424}
\|T^{m,n}-P_{n+1}\| &=& \|T_{n}T^{m,n-1}-P_{n}\|\\
&\leq& \|T_{n}T^{m,n-1}-T_{n}P_{n-1}\|+\|T_{n}P_{n-1}-P_n\|\nonumber\\[2mm]
&= &\|T_{n+1}T_{n}T^{m,n-2}-T_{n}T_{n-1}P_{n-1}\|+\|T_{n}P_{n-1}-T_{n}P_{n}\|\nonumber\\[2mm]
&\leq &\|T^{n-1,n}T^{m,n-2}-T^{n-1,n}P_{n-1}\|+\d_P(T_n)\|P_{n-1}-P_n\|\nonumber\\[2mm]
&\leq&\|T^{n-1,n}T^{m,n-2}-T^{n-1,n}P_{n-2}\|+\|T^{n-1,n}P_{n-2}-T^{n-1,n}P_{n-1}\|\nonumber\\[2mm]
&&+\d_P(T_n)\|P_{n-1}-P_n\|\nonumber\\[2mm]
&\leq&\|T^{n-2,n}T^{m,n-3}-T^{n-2,n}P_{n-2}\|+\d_P(T^{n-1,n})\|P_{n-2}-P_{n-1}\|\nonumber\\[2mm]
&&+\d_P(T_n)\|P_{n-1}-P_n\|\nonumber\\[2mm]
&&\vdots\nonumber\\
&\leq&\|T_m-P_{m}\|\d_P(T^{m+1,n})+\sum_{\ell=m+1}^{n-1}\|P_{\ell-1}-P_\ell\|\d_P(T^{\ell,n}).
\end{eqnarray*}

Due to $\d_P(T^{m+1,n})\to 0$, there exists an $N_2\in\bn$ such
that $\d_P(T^{m+1,n})<\i$, for all $n\geq N_2$.

On other hand, from \eqref{422} we infer that
$\|P_{n-1}-P_n\|<2\i$ for all $n\geq N_1$. Without loss of
generality, we may assume that $N_1>m$. Hence,
\begin{eqnarray}\label{425}
\sum_{\ell=m+1}^{n-1}\|P_{\ell-1}-P_\ell\|\d_P(T^{\ell,n})&=&
\underbrace{\sum_{\ell=m+1}^{N_1}\|P_{\ell-1}-P_\ell\|\d_P(T^{\ell,n})}_{I_1}\nonumber
\\[2mm]
&&+
\underbrace{\sum_{j=N_1+1}^{n-1}\|P_{j-1}-P_j\|\d_P(T^{j,n})}_{I_{2}}
\end{eqnarray}

\noindent Let us estimate $I_1$ and $I_2$, separately.

We start with $I_2$. From \eqref{Eq.4} we easily find
\begin{eqnarray}\label{426}
I_2\leq 2\i\sum_{j=N_1+1}^{n-1}\d_P(T^{j,n})\leq 2\i C.
\end{eqnarray}

Now consider $I_1$. For any $\ell\in\{m+1,\ldots ,N_1\}$ one has
$\|P_{\ell-1}-P_\ell\|\leq 2$. The weakly $P$-ergodicity of
$T^{m,n}$ implies the existence of an $N_3\in\bn$ such that
$\d_P(T^{N_1,n})<\i$ for all $n\geq N_3$. Due to Equation
(\ref{Eqn5}) and by (vi) Theorem \ref{MA1-Thr. 3.7}, one can see
that
$$
\d_P(T^{\ell,n})\leq \d_P(T^{N_1,n})<\i.
$$
Therefore,
\begin{eqnarray}\label{427}
I_1=\sum_{\ell=m+1}^{N_1}\|P_{\ell-1}-P_\ell\|\d_P(T^{\ell,n})\leq
2\sum_{\ell=m+1}^{N_1}\d_P(T^{N_1,n})\leq 2(N_1-m)\i.
\end{eqnarray}
Hence, from \eqref{422}-\eqref{427} we obtain
$$
\|T^{m,n}-P\|\leq (3+C+2(N_1-m))\i
$$
for all $n\geq\max\{N_1,N_2,N_3\}$, which proves the assertion.
\end{proof}

\begin{Theorem}\label{4.3} Let $(X,X_+,\ck,f)$ be an abstract state space and $\{T_n\}$ be
a generating sequence of the NDMC. Let $\{P_n\}$ be a left decreasing sequence of projections of $X$ such that $T_nP_n=P_nT_n=P_n, \forall n\in \mathbb{N}$. Assume that
 $\{T^{k,n}\}$ is uniformly $P$-ergodic for a given projection $P$ and suppose there exist $k_n\in \bn$ and $\gamma_n\in [0,1)$ such that
 \[\d_{P_n}(T_n^{k_n})\leq \gamma_n \ \ \textrm{with} \ \ \sup_{n} \frac{k_n}{1-\gamma_n}<\infty.\]
  Then $P_n$ converges to $P$ in norm.
\end{Theorem}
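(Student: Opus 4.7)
The plan is to bound $\|P_n - P\|$ via the decomposition
\[
P_n - P = P_n(I - P) + (P_n - I)P
\]
and show each summand vanishes in operator norm. Three preliminary identities are used throughout. First, letting $n\to\infty$ in $T^{k,n} = T^{k+1,n}T_{k+1}$ and using uniform $P$-ergodicity gives $PT_{k+1} = P$, hence $PT_n^{k_n} = P$ and $PT^{0,n} = P$. Second, Lemma \ref{PP}(i) gives $P_n = P_nP_j$ for $j\leq n$, so $P_nT_j = P_nP_jT_j = P_nP_j = P_n$, which iterates to $P_nT^{0,n} = P_n$. Third, iterating Proposition \ref{dp12-less-1-2} yields $\|T_n^{jk_n}(I - P_n)x\|\leq \gamma_n^j\|(I - P_n)x\|$, so $T_n^{jk_n}x\to P_nx$ in norm (for each fixed $n$), and since $T_n^{jk_n}$ is Markov, $\|P_n\|\leq 1$.

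For the first summand, I first establish $P\leq^\ell P_n$, i.e., $PP_n = P$. Given $y\in N_{P_n}$, identity~(i) gives $Py = PT_n^{jk_n}y$, while the contraction yields $\|T_n^{jk_n}y\|\leq \gamma_n^j\|y\|\to 0$, forcing $Py = 0$. Combined with identity~(ii),
\[
\|P_n(I - P)\| = \|P_n - P_nP\| = \|P_nT^{0,n} - P_nP\| \leq \|T^{0,n} - P\| \to 0
\]
by uniform $P$-ergodicity.

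For the second summand, I first show $\|(T_n - I)P\|\to 0$. Since $T^{0,n+1} - T^{0,n} = (T_{n+1} - I)T^{0,n}$ tends to zero in norm (both $T^{0,n+1}$ and $T^{0,n}$ converge to $P$), the identity
\[
(T_{n+1} - I)P = (T_{n+1} - I)T^{0,n} - (T_{n+1} - I)(T^{0,n} - P)
\]
together with $\|T_{n+1} - I\|\leq 2$ yields the claim. Next, for $z$ in the range of $P$, observe that $T_n^{k_n}z - z\in N_{P_n}$ (since $P_nT_n^{k_n} = P_n$ forces $P_n(T_n^{k_n}z - z) = 0$). Iterating the contraction, $\|T_n^{(j+1)k_n}z - T_n^{jk_n}z\|\leq \gamma_n^j\|T_n^{k_n}z - z\|$; telescoping and using $T_n^{jk_n}z\to P_nz$ gives
\[
\|P_nz - z\| \leq \frac{\|T_n^{k_n}z - z\|}{1 - \gamma_n} \leq \frac{k_n}{1 - \gamma_n}\|T_nz - z\|,
\]
where the last inequality uses $\|T_n^{k_n}z - z\|\leq k_n\|T_nz - z\|$ (valid since $\|T_n\|\leq 1$). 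Writing $z = Px$ with $\|x\|\leq 1$ and setting $C := \sup_n k_n/(1-\gamma_n)$, one obtains $\|(P_n - I)P\|\leq C\|(T_n - I)P\|\to 0$.

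The main obstacle is that the $\delta_{P_n}$-contraction is a priori only a statement about $N_{P_n}$, whereas controlling $(P_n - I)P$ requires information about vectors in the range of $P$, which generally do not lie in $N_{P_n}$. The crucial workaround is that even when $z\notin N_{P_n}$, the increment $T_n^{k_n}z - z$ does lie in $N_{P_n}$, so the contraction applies to the telescoping differences; the quantitative hypothesis $\sup_n k_n/(1 - \gamma_n) < \infty$ then exactly cancels the geometric-series blowup $1/(1 - \gamma_n)$ against the telescoping factor $k_n$.
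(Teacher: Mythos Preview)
Your proof is correct and takes a genuinely different route from the paper's.

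The paper introduces $D_n := P_n - T^{1,n-1}$ and $E_n := T^{1,n} - T^{1,n-1}$, observes that both lie in $N_{P_n}$, derives the iteration identity $T_nD_n = D_n - E_n$ and unfolds it to
\[
D_n = E_n + \sum_{j=1}^{2^N-1} T_n^jE_n + T_n^{2^N}D_n,
\]
then applies Theorem~\ref{MA1-Thr. 3.7}(v) to each term. The hypothesis $\sup_n k_n/(1-\gamma_n)<\infty$ enters as a uniform bound on $\sum_{j\geq 1}\delta_{P_n}(T_n^j)$, and $\|E_n\|\to 0$ finishes the argument. By contrast, you split $P_n - P = P_n(I-P) + (P_n - I)P$ and treat the pieces separately: the first by $P_nT^{0,n}=P_n$ and uniform $P$-ergodicity, the second via the key observation that the \emph{increments} $T_n^{k_n}z - z$ lie in $N_{P_n}$ even when $z$ does not, followed by a telescoping geometric series and the auxiliary fact $\|(T_n - I)P\|\to 0$.

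Your decomposition is more modular and yields the clean quantitative byproduct $\|(P_n - I)P\|\leq C\,\|(T_n - I)P\|$, together with the pleasant side observation that the hypotheses already force $\|P_n\|\leq 1$ (a point the paper uses tacitly when writing $\|D_n\|\leq 2$). The paper's approach, on the other hand, is more unified---a single resolvent-type identity does all the work---and avoids the intermediate step of proving $(T_n - I)P\to 0$. Both arguments exploit the hypothesis $\sup_n k_n/(1-\gamma_n)<\infty$ in essentially the same way: to dominate a geometric series in $\gamma_n$ whose partial sums are weighted by $k_n$. One minor remark: the identity $PP_n = P$ you establish at the start of the first-summand paragraph is correct but not actually used in your argument; the inequality $\|P_n(I-P)\|\leq\|T^{0,n}-P\|$ follows from $P_n = P_nT^{0,n}$ and $\|P_n\|\leq 1$ alone.
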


\begin{proof}
As $\{T^{m,n}\}$ is uniformly $P$-ergodic, for each
$m\in\bn$, we have $\|T^{m,n}-P\|\to 0$ as $n\to\infty$.
Therefore, without loss of generality, we may assume $m=1$. Define
\begin{eqnarray}\label{43e}
E_n=T^{1,n}-T^{1,n-1},\ \ \ D_n=P_{n}-T^{1,n-1}.
\end{eqnarray}
One can see that
\begin{eqnarray*}
\|P_{n}-P\| &\leq&\|P_{n}-T^{1,n-1}\|+\|T^{1,n-1}-P\|\\
&=&\|D_n\|+\|T^{1,n-1}-P\|
\end{eqnarray*}
 so it is enough to show that
$\|D_n\|\to 0$ as $n\to\infty$, since $\|T^{1,n-1}-P\|\to 0$.

\noindent Notice that
\begin{equation*}
T_nD_n=T_nP_{n}-T_n T^{1,n-1}=P_{n}-T^{1,n}=D_n-E_n,
\end{equation*}
which yields
\begin{equation*}
T_n(T_nD_n+E_n)=T_n(D_n-E_n+E_n)=T_nD_n=D_n-E_n.
\end{equation*}
Then
\begin{equation}\label{432}
D_n=E_n+T_n^2D_n+T_nE_n
\end{equation}
Now iterating \eqref{432} $N$ times, one gets
\begin{eqnarray}\label{433}
D_n&=&E_n+T_nE_n+T_n^2E_n+T^3_nE_n+T_n^4D_n\nonumber\\[2mm]
&\vdots&\nonumber\\
&=&E_n+\sum_{j=1}^{2^N-1}T^j_nE_n+T^{2^N}_n D_n.
\end{eqnarray}

\noindent As $\{P_{n}\}$ is left decreasing sequence and for every $n\in
\bn,\ P_{n}T_{n}=P_{n}$,
\begin{eqnarray*}
P_{n}T^{1,n} &=& P_{n}T_{n}T_{n-1}\ldots T_{2}\\
&=& P_{n}T_{n-1}\ldots T_{2}\\
&=& P_{n}P_{n-1}T_{n-1}\ldots T_{2}\\
&=& P_{n}P_{n-1}T_{n-2}\ldots T_{2}\\
&=& P_{n}.
\end{eqnarray*}
Therefore,
\[
P_{n}E_{n}=P_{n}(T^{1,n}-T^{1,n-1})=P_{n}T^{1,n}-P_{n}T^{1,n-1}=P_{n}-P_{n}=0.
\]
Also, we have
\[ P_{n}D_{n}=P_{n}- P_{n}T^{1,n-1}=0.\]

\noindent Hence, by \eqref{433} and using (v) of Theorem
\ref{MA1-Thr. 3.7}, we obtain
\begin{eqnarray}\label{434}
\|D_n\| &\leq & \|E_n\|+\sum_{j=1}^{2^N-1}\|T^j_nE_n\|+\|T^{2^N}_n
D_n\|\nonumber\\[2mm]
&\leq&\|E_n\|+\sum_{j=1}^{2^N-1}\d_{P_{n}}(T^j_n)\|E_n\|+\d_{P_{n}}(T^{2^N}_n)\|D_n\|\nonumber\\[2mm]
&\leq&\|E_n\|\left (1+\sum_{j=1}^{2^N-1}\d_{P_{n}}(T^j_n)\right )+2\d_{P_{n}}(T^{2^N}_n).
\end{eqnarray}

By $\d_{P_n}(T_n^{k_n})\leq \gamma_n <1$, it follows that
$\d_{P_{n}}(T^{2^N}_n)<\i$ for a sufficiently large $N$. Moreover,
\begin{eqnarray*}\label{435}
\sum_{j=1}^{2^N-1}\d_{P_{n}}(T^j_n)&\leq&
\sum_{j=1}^{\infty}\d_{P_{n}}(T^j_n)\nonumber\\[2mm]
&=&\sum_{j=1}^{k_{n}}\d_{P_{n}}(T^j_n)+\sum_{j=k_{n}+1}^{2k_{n}}\d_{P_{n}}(T^j_n)+\cdots\nonumber\\[2mm]
&\leq & k_{n}+k_{n}\g_{n}+k_{n}\g_{n}^2+\cdots\nonumber\\[2mm]
&=&\frac{k_{n}}{1-\g_{n}}\leq K,
\end{eqnarray*}
Then, using \eqref{434}, we deduce
$$\|D_n\| \leq \|E_{n}\|(1+K)+2\i .$$
Now, according to $\|E_n\|\to 0$ as $n\to\infty $, one finds
$\|D_{n}\|\to 0$, which completes the proof.
\end{proof}

\begin{remark}
We stress that these types of results are even new in the case of
classical $L^1$-spaces. Moreover, if one considers abstract state
spaces associated with $C^*$-algebras, we get totaly new sort of
results which open new insight into the field of non-commutative
probability.
\end{remark}

\section{Weak $P$-ergodicity and the Doeblin condition}

In this section, we are going to investigate the weak
$P$-ergodicity of NDMC by means of an analogue of Doeblin's
condition on abstract state spaces.

Let $(X,X_+,\ck,f)$ be an abstract state space and let $P$ be a
projection on $X$. Now we are going to provide an analogue of
Doeblin's condition for NMMC associated with $P$ \cite{DP,M2013}, as follows:

{\bf Condition  ${\mathfrak{D}}_P$}.  We say that a NDMC
$\{T^{k,n}\}$ defied on $X$ satisfies \textit{condition
$\mathfrak{D}_P$}, if for every $k\in \bn$, there exist $\l_k\in
[0,1]$ and $n_k\in\N$, and for every $x,y\in \ck$, with $x-y\in
N_P$, one can find $z_k^{xy}\in \ck$,
 and $\f_{x,y}^k\in X_+$ with
$\sup\|\f_{x,y}^k\|\leq \frac{\l k}{2}$ such that
\begin{equation}\label{DC}
T^{k,k+n_k}x+\f_{x,y}^k\geq\l_k z_k^{xy}\ \ \text{and}\ \
T^{k,k+n_k}y+\f_{x,y}^k\geq\l_k z_k^{xy}.
\end{equation}

\vspace{5mm}

The next result is the main result of this section.

\begin{Theorem}\label{D}
Let $(X,X_+,\ck,f)$ be an abstract state space, $P$ be a Markov
projection on $X$, and $\{T_n\}$ be a generating sequence of the
NDMC $\{T^{k,n}\}$. Assume that $T_nP=PT_n$, for all $n\in
\mathbb{N}$. Then the following conditions are equivalent:
\begin{enumerate}
\item[(i)] the chain $\{T^{k,n}\}$ is weakly $P$-ergodic;
\item[(ii)] the chain $\{T^{k,n}\}$ satisfies the condition
$\mathfrak{D}_P$ with $\sum_{k=1}^\infty \l_k$ diverges;\\
\item[(iii)] for each $k\in\N$ there exist $\mu_k\in [0,1)$ and a
number $n_k\in\N$ such that
$$\d_P(T^{k,k+n_k})\leq \mu_k$$
with $\sum_{k=1}^\infty(1-\m_k)$ diverges.\\
\end{enumerate}
\end{Theorem}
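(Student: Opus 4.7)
I would prove the equivalences by circling through (i)$\Rightarrow$(iii)$\Rightarrow$(i) and establishing (ii)$\Leftrightarrow$(iii), using the coefficient condition (iii) as the pivot.

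The implication (i)$\Rightarrow$(iii) is immediate: weak $P$-ergodicity gives $\d_P(T^{k,n})\to 0$ for every $k$, so I pick $n_k$ with $\d_P(T^{k,k+n_k})\leq 1/2$ and set $\mu_k=1/2$, whence $\sum(1-\mu_k)=\infty$ trivially. For (iii)$\Rightarrow$(i) I exploit sub-multiplicativity: since $PT_n=T_nP$, Lemma \ref{TNp-in-Npmeans} gives $T^{j,\ell}(N_P)\subseteq N_P$, and Proposition \ref{dp12-less-1-2} yields $\d_P(T^{j,\ell}T^{i,j})\leq \d_P(T^{j,\ell})\d_P(T^{i,j})$. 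I build the greedy sequence $k_0=m$, $k_{s+1}=k_s+n_{k_s}$, so that $\d_P(T^{m,k_N})\leq \prod_{s<N}\mu_{k_s}$; divergence of $\sum(1-\mu_k)$, together with monotonicity of $\d_P(T^{m,n})$ in $n$ (a consequence of $\d_P\leq 1$ and sub-multiplicativity), then drives $\d_P(T^{m,n})$ to $0$. A subtle point to handle carefully is ensuring the divergence of $\sum_k(1-\mu_k)$ transfers to the subsequence $\sum_s(1-\mu_{k_s})$; here I use the freedom to enlarge $n_k$ (since the bound $\d_P(T^{k,k+n_k})\leq\mu_k$ is preserved under enlargement by sub-multiplicativity with $\d_P\leq 1$) so that the greedy iterates absorb enough mass of the original sum.

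For (ii)$\Rightarrow$(iii) I take $x,y\in\ck$ with $x-y\in N_P$, set $a=T^{k,k+n_k}x+\phi^k_{x,y}-\l_k z^{xy}_k$ and $b=T^{k,k+n_k}y+\phi^k_{x,y}-\l_k z^{xy}_k$; condition $\mathfrak{D}_P$ puts both in $X_+$. Using $T^*f=f$, $f(x)=f(y)=f(z^{xy}_k)=1$, and additivity of the norm on $X_+$, I get $\|a\|=\|b\|=1+\|\phi^k_{x,y}\|-\l_k$, so
\[
\|T^{k,k+n_k}(x-y)\|=\|a-b\|\leq \|a\|+\|b\|\leq 2-\l_k,
\]
under the intended bound $\|\phi^k_{x,y}\|\leq \l_k/2$ (modulo the apparent typo $\l k/2$ in the statement). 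Theorem \ref{MA1-Thr. 3.7}(iii) and Lemma \ref{MA1-Lemma 3.6} then pass this estimate to an arbitrary $w\in N_P$ via the decomposition $w=\alpha(w)(u-v)$ with $\alpha(w)\leq (\l/2)\|w\|$, yielding $\d_P(T^{k,k+n_k})\leq \mu_k$ with $1-\mu_k$ proportional to $\l_k$, so divergence transfers.

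The main obstacle is (iii)$\Rightarrow$(ii), where one must manufacture the Doeblin witnesses from only a coefficient bound. The plan is: let $a=T^{k,k+n_k}x$, $b=T^{k,k+n_k}y$, note $a-b\in N_P$ because $T^{k,k+n_k}$ commutes with $P$, and apply Lemma \ref{MA1-Lemma 3.6} to write $a-b=\tilde\alpha(u-v)$ with $u,v\in\ck$ and $\tilde\alpha\leq(\l/2)\|a-b\|\leq \l\mu_k$. Then $a+\tilde\alpha v=b+\tilde\alpha u\in X_+$ is a common positive element; after normalising by its norm I take this as $\l_k z^{xy}_k$ and set $\phi^k_{x,y}$ equal to the appropriate symmetric witness (of the form $\tilde\alpha v$ combined with its counterpart so that both Doeblin inequalities hold). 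The delicate points are controlling $\|\phi^k_{x,y}\|$ within the prescribed tolerance and extracting a lower bound $\l_k\geq c(1-\mu_k)$ so that $\sum\l_k=\infty$ propagates from $\sum(1-\mu_k)=\infty$.
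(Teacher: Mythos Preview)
Your route differs from the paper's: it argues (i)$\Rightarrow$(ii)$\Rightarrow$(iii)$\Rightarrow$(i), so the implication (iii)$\Rightarrow$(ii) that you call your ``main obstacle'' is never needed. Your (ii)$\Rightarrow$(iii) is essentially the paper's argument, your (i)$\Rightarrow$(iii) is a legitimate shortcut of the paper's two-step route, and your (iii)$\Rightarrow$(i) via the greedy subsequence $k_{s+1}=k_s+n_{k_s}$ is exactly what the paper does.

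The genuine gap is in your attempted (iii)$\Rightarrow$(ii). Writing $a-b=\tilde\alpha(u-v)$ via Lemma~\ref{MA1-Lemma 3.6} and building $\phi_{x,y}^k$ from $\tilde\alpha u,\tilde\alpha v$ gives $\|\phi_{x,y}^k\|$ of order $\tilde\alpha\leq\l\mu_k$; when $\mu_k$ is close to $1$ this is close to $\l$, and there is no mechanism to force it below $\l_k/2$ while keeping $\l_k$ bounded away from $0$, so the ``delicate points'' you flag are in fact unresolved. The paper bypasses this entirely by deriving (ii) from (i): once the chain is weakly $P$-ergodic, pick $n_k$ with $\d_P(T^{k,k+n_k})<1/8$; for $x,y\in\ck$ with $x-y\in N_P$ set $y_0=Px=Py\in\ck$, put $z_k^{xy}=T^{k,k+n_k}y_0$, $\l_k=1$, and
\[
\phi_{x,y}^k=(T^{k,k+n_k}x-z_k^{xy})_{-}+(T^{k,k+n_k}y-z_k^{xy})_{-}.
\]
The generating property of the cone supplies these negative parts, both Doeblin inequalities are then automatic, and since $\|T^{k,k+n_k}u-z_k^{xy}\|\leq 2\d_P(T^{k,k+n_k})<1/4$ one gets $\|\phi_{x,y}^k\|\leq 1/2$. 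This is far simpler than manufacturing the witnesses from a single coefficient bound $\mu_k$.

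On (iii)$\Rightarrow$(i): you are right that the passage from $\sum_k(1-\mu_k)=\infty$ to $\prod_s\mu_{k_s}\to 0$ along the greedy subsequence deserves justification, and the paper glosses over it too. Your ``enlarge $n_k$'' fix does not help, however: enlarging the jumps only makes the subsequence sparser and cannot force $\sum_s(1-\mu_{k_s})$ to diverge. This is a difficulty shared with the paper's own argument, not one introduced by your approach.
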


\begin{proof} (i)$\Rightarrow$(ii): Assume that $\d_P(T^{k,n})\rightarrow 0$ as
$n\rightarrow \infty$. From the definition of $\d_P$, we have
\[\sup \{\|T^{k,n}x\|;\ \|x\|\neq 0\ \ \text{with}\ x\in N_P \}\rightarrow 0  \ \text{as}\ n\rightarrow \infty .\]
In particular, one finds
\[\sup \{\|T^{k,n}x-T^{k,n}y\|; \ |x-y|\neq 0\  \text{with}\ x-y\in N_P \}\rightarrow 0  \ \text{as}\ n\rightarrow \infty . \]

Now fix $y_0\in \ck$. Then there exists $n_k\in \bn$ such that for
all $x\neq y$ with $x-y_0\in N_P$,$y-y_0\in N_P$ we have
\[\|T^{k,k+n_k}x-T^{k,k+n_k}y_0\|< \frac{1}{4},\ \ \ \|T^{k,k+n_k}y-T^{k,k+n_k}y_0\|< \frac{1}{4}.\]
By means of the decomposition
\[T^{k,k+n_k}x-T^{k,k+n_k}y_0= (T^{k,k+n_k}x-T^{k,k+n_k}y_0)_{+}-(T^{k,k+n_k}x-T^{k,k+n_k}y_0)_{-}\]
\[T^{k,k+n_k}y-T^{k,k+n_k}y_0= (T^{k,k+n_k}y-T^{k,k+n_k}y_0)_{+}-(T^{k,k+n_k}y-T^{k,k+n_k}y_0)_{-}\]
let us define
\[\tilde{\f}_u^{(k)}:= (T^{k,k+n_k}u-T^{k,k+n_k}y_0)_{-},\ \text{where}\ u\in\{x,y\}.\]
Put
\[\f_{xy}^{(k)}:=\tilde{\f}_x^{(k)}+ \tilde{\f}_y^{(k)}. \]
Then, for all $x,y\in \ck$, with $x-y\in N_P$, we have
$\|\f_{xy}^{(k)} \|\leq \frac{1}{2}$. Therefore,
\begin{eqnarray*}
  T^{k,k+n_k}u + \f_{xy}^{(k)} &\geq & T^{k,k+n_k}u + \tilde{\f}_u^{(k)},\ u\in\{x,y\}\\
   &=& T^{k,k+n_k}y_0 + T^{k,k+n_k}u - T^{k,k+n_k}y_0 +  \tilde{\f}_u^{(k)}\\
   & \geq & T^{k,k+n_k}y_0.
\end{eqnarray*}
By defining $z_k^{xy}:=T^{k,k+n_k}y_0 $ and $\l_k=1$, the condition $\frak{D}_P$ is obtained.

  (ii)$\Rightarrow$(iii): For each $k\in \bn$, and for every $x,y\in \ck$ with $x-y\in N_P$, by the condition ${\mathfrak{D}}_P$, we have
  \[   T^{k,k+n_k}x+\f_{x,y}^k\geq\l_k z_k^{xy},\ \ T^{k,k+ n_k}y+\f_{x,y}^k\geq\l_k z_k^{xy},\]
with $\|\f_{x,y}^k\|\leq \frac{\l k}{2},\ \forall x,y\in \ck$. Then
\begin{eqnarray*}
 \|T^{k,k+n_k}x+\f_{x,y}^k-\l_k z_k^{xy}\|  &=& f(T^{k,k+n_k}x+\f_{x,y}^k-\l_k z_k^{xy}) \\
   &=& 1-(\l_k-f(\f_{x,y}^k)) \\
   &\leq&   1-\frac{\l_k}{2}.
\end{eqnarray*}
Similarly, $\|T^{k,k+n_k}y+\f_{x,y}^k-\l_k z_k^{xy}\|\leq 1-\frac{\l_k}{2}$. Let $c=\l_k-f(\f_{x,y}^k)$ and
\[ x_1:=\frac{1}{1-c}(T^{k,k+n_k}x+\f_{x,y}^k-\l_k z_k^{xy})\]
\[ y_1:=\frac{1}{1-c}(T^{k,k+n_k}y+\f_{x,y}^k-\l_k z_k^{xy}).\]
Then $x_1,y_1\in \ck$ and
\[ \|T^{k,k+n_k}x- T^{k,k+n_k}y \|=(1-c)\|x_1-y_1\|\leq 2(1-c),\] which implies that
\[\d_P(T^{k,k+n_k})\leq 1-\frac{\l_k}{2},\]
which proves (ii) by taking $\mu_k=1-\frac{\l_k}{2}$.

(iii)$\Rightarrow$(i): Given $k\in \bn$, then there exists
$\mu_k\in [0,1)$ and $n_k\in\N$ such that $$\d_P(T^{k,k+n_k})\leq
\mu_k.$$ Let $l_1=k+n_k$, by (ii),  one finds $n_{l_1}\in\bn$ and
$\mu_{l_1}\in [0,1)$ such that $\d_P(T^{l_1,l_1+n_{l_1}})\leq
\mu_{l_1}$. Continuing in the same argument, there exists a
sequence $\{l_j\}_{j=0}^\infty$, with $l_0=k$ and $\mu_{l_j}\in
[0,1)$ such that
 \[\d_P(T^{l_j,l_j+n_{l_j}})\leq \mu_{l_j}.\]
 For a large $n$, we define $L_n$ as
 \[L_n:= \max\{ j;\ l_j+n_{l_j}\leq n\}.\]
Due to the hypothesis of the theorem, we have $T^{k,n}P=PT^{k,n}$,
and then, from (vi) Theorem \ref{MA1-Thr. 3.7} one finds
 \begin{eqnarray*}
   \d_P(T^{k,n}) &=& \d_P(T^{l_{L_n},n}T^{l_{L-1},l_L}, \ldots T^{l_0,l_1}   ) \\
    &\leq & \prod_{j=1}^{L_n} \d_P(T^{l_{L_n-j},l_{L_n-j+1}}) \\
  &\leq & \prod_{j=1}^{L_n} \mu_{l_j} \rightarrow 0 \ \text{as}\ n\rightarrow
  \infty,
 \end{eqnarray*}
 since $\sum_{k=1}^\infty(1-\m_k)$ diverges,
hence (i) holds.
This completes the proof.
\end{proof}

\begin{remark}
We point out that the condition  $T_nP=PT_n$, $n\in\bn$ is only
used to establish the implication (iii)$\Rightarrow$(i). Hence, we
conclude that the implications
(i)$\Rightarrow$(ii)$\Rightarrow$(iii) are true without the stated
condition.
\end{remark}

\begin{remark} It is also worth to mention that an analogous kind of results to Theorem \ref{D} have been
established in \cite{HR,Paz,Rod} in the setting of $X=\ell_1$ and
$P$ is a one dimensional projection.
\end{remark}

As an application of the previous theorem, we have the following result.

\begin{Theorem}\label{TH.5.2}
  Let $(X,X_+,\ck,f)$ be an abstract state space, $P$ be a Markov projection on $X$, and for every $n\in\bn$ let $T_n\in \Sigma_P(X)$.
  Assume that $\|T_n-P\|< \epsilon_n$, where $\epsilon_n\rightarrow 0$. Then the NDMC $\{T^{m,n}\}$ is
  uniformly $P$-ergodic.
\end{Theorem}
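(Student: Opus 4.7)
The plan is to first obtain the weak $P$-ergodicity of $\{T^{m,n}\}$ from a direct estimate of the generalized Dobrushin coefficient, and then upgrade this conclusion to uniform $P$-ergodicity through the splitting of $X$ induced by the projection $P$.

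I would begin by establishing the key bound $\d_P(T_n)\leq\epsilon_n$: for any $x\in N_P$ one has $Px=0$, so $T_nx=(T_n-P)x$ and hence $\|T_nx\|<\epsilon_n\|x\|$. Since each $T_n\in\Sigma_P(X)$, the submultiplicativity of $\d_P$ in Theorem \ref{MA1-Thr. 3.7}(vi) applied along the generating sequence gives
\[
\d_P(T^{m,n})\leq\prod_{k=m+1}^{n}\d_P(T_k)\leq\prod_{k=m+1}^{n}\epsilon_k,
\]
which tends to zero as $n\to\infty$ (the factors are eventually at most $1/2$ since $\epsilon_k\to 0$). Equivalently, condition (iii) of Theorem \ref{D} is met with $n_k=1$ and $\mu_k=\epsilon_{k+1}$, so the chain $\{T^{m,n}\}$ is weakly $P$-ergodic.

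To pass from weak to uniform $P$-ergodicity, I would decompose
\[
T^{m,n}-P\;=\;T^{m,n}(I-P)\;+\;\bigl(T^{m,n}P-P\bigr).
\]
Since $(I-P)X\subseteq N_P$ and $T^{m,n}(N_P)\subseteq N_P$ (a direct consequence of $PT_k=T_kP$ via Lemma \ref{TNp-in-Npmeans}), the first summand has norm at most $2\,\d_P(T^{m,n})$, which vanishes by the previous step. For the second summand, commutativity yields $T^{m,n}P=PT^{m,n}$ together with the recursive identity $T^{m,n}P-P=T_n(T^{m,n-1}P-P)+(T_n-P)P$; combining this with the contractivity $\|T_n\|=1$, the bound $\|(T_n-P)P\|<\epsilon_n$, and the weak $P$-ergodicity already in hand should force $\|T^{m,n}P-P\|\to 0$. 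The main obstacle is precisely this second summand: a crude telescoping yields only $\|T^{m,n}P-P\|\leq\sum_{k=m+1}^{n}\epsilon_k$, which need not vanish, so the estimate must combine the contractivity of the $T_k$ on $\mathrm{range}(P)$ with the decay of $\d_P(T^{m,n})$ rather than merely summing the successive perturbations.
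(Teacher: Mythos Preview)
Your route to weak $P$-ergodicity is correct and coincides with the paper's: both observe that $T_nx=(T_n-P)x$ for $x\in N_P$, whence $\d_P(T_n)<\epsilon_n$, and then appeal to submultiplicativity (you directly, the paper via Theorem~\ref{D}(iii)).

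The gap is exactly where you locate it, and it is not repairable along the lines you sketch. For every $x$ one has $P\bigl(T^{m,n}Px-Px\bigr)=T^{m,n}Px-Px$ (since $PT_k=T_kP$), so the second summand takes values in the \emph{range} of $P$, not in $N_P$. Consequently $\d_P(T^{m,n})$ exerts no control over it, and your proposed ``combination of contractivity on $\mathrm{range}(P)$ with the decay of $\d_P$'' cannot succeed: these two ingredients live on complementary subspaces. In fact the stated hypotheses alone do not force $\|T^{m,n}P-P\|\to 0$. On $X=\br^4$ with the $\ell_1$-norm, take $P(x_1,x_2,x_3,x_4)=(x_1{+}x_3,\,x_2{+}x_4,\,0,\,0)$ and let $T_n$ send each of $e_1,e_3$ to $(1{-}\eta_n)e_1+\eta_n e_2$ and each of $e_2,e_4$ to $\eta_n e_1+(1{-}\eta_n)e_2$, with $\eta_n=1/n^2$. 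Then $T_n\in\Sigma_P(X)$ and $\|T_n-P\|=2/n^2\to 0$, yet on $\mathrm{range}(P)$ the product $T^{m,n}$ converges to a doubly stochastic matrix with second eigenvalue $\prod_{k>m}(1-2/k^2)\in(0,1)$, hence not to the identity; thus $T^{m,n}\not\to P$.

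The paper does not attempt to estimate $T^{m,n}P-P$ directly. After establishing weak $P$-ergodicity it invokes Theorem~\ref{4.1} with the constant sequence $P_n\equiv P$. Observe, however, that hypothesis~(i) of that theorem requires $T_nP_n=P_nT_n=P_n$, in particular $T_nP=P$, which is strictly stronger than the commutation $T_n\in\Sigma_P(X)$ assumed here. If one adds $T_nP=P$ to the hypotheses, then $T^{m,n}P=P$ identically and your second summand vanishes, so your decomposition finishes the proof at once; without that extra condition, the example above blocks both arguments.
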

\begin{proof}
  For every $n\in \bn$, let $Q_n=T_n-P$.  Then, $\|Q_n\|<\epsilon_n$ and $Q_nP=PQ_n$. For every $n\in \bn$,
  one finds
  \begin{eqnarray*}
    \d_P(T_n) &=& \sup_{x\in N_P, \|x\|=1}\|T_n(x)\| \\
     &=& \sup_{x\in N_P, \|x\|=1}\| Q_nx+Px \| \\
     &=& \sup_{x\in N_P, \|x\|=1}\| Q_nx\| \\
     &=& \d_P(Q_n)\\
     &\leq & \|Q_n\|<\epsilon_n.
  \end{eqnarray*}

Also, we have
  \begin{eqnarray*}
    PT_n &=& P(Q_n+P) \\
     &=& P^2Q_n+P \\
     &=& PQ_nP+P^3 \\
     &=& P(Q_n+P)P\\
     &=& PT_nP.
  \end{eqnarray*}
Now let us choose $n_k\in\bn$ such that $\epsilon_{k+n_k}\leq
\frac{1}{2^k}$. Then by Proposition \ref{TNp-in-Npmeans},
  \begin{eqnarray*}
    \d_P(T^{k,k+n_k}) &=& \d_P(T_{k+n_k}T_{k+n_k-1}\ldots T_{k+1}) \\
     &=& \d_P(T_{k+n_k})\d_P(T_{k+n_k-1})\ldots \d_P(T_{k+1}) \\
     &=& \d_P(T_{k+n_k})\\
     &\leq & \epsilon_{k+n_k}\\
     &\leq  & \frac{1}{2^k}.
     \end{eqnarray*}

Letting $\mu_k=\frac{1}{2^k}$, and using Theorem \ref{D}, we infer
that $\{T^{k,n} \}$ is weakly $P$-ergodic.
     Now, for every $n\in \bn$, put $P_n=P$ and then the three conditions (i), (ii) and (iii)
 of Theorem \ref{4.1} are satisfied which yields that $\{T^{m,n}\}$ is
  uniformly $P$-ergodic.
\end{proof}

Now, we are going to construct a left decreasing sequence of projections $\{P_n\}$ on $X$ which converges to a projection $P$ and to construct a sequence $\{T_n\}$ of uniformly $P_n$-ergodic Markov operators on $X$ such that the generated NDMC $\{T^{k,n}\}$ is uniformly $P$-ergodic.

\begin{example}\label{Ex.5.5}
Consider the space $\ell_1$, the subspaces $\ca =\{x\in \ell_1;\ x_{2n}=0\}$ and the operator $P:\ell_1 \to \ca$  defined by
\[P_1(x)= (x_1+x_2, 0, x_3+x_4, 0, x_5+x_6, 0 \ldots).\]
Then $P$ is a projection on $\ca$. Let $\cq_1:\ell_1 \to \ell_1$ be the operator defined by
\[\cq_1(x) = \left (\frac{-x_2}{2},\frac{x_2}{2}, \frac{-x_4}{2}, \frac{x_4}{2}, \frac{-x_6}{2}, \frac{x_6}{2}, \ldots \right ).\]
It is clear that $\cq_1^{n} \to 0$, as $n\rightarrow \infty$ so for some $n_0\in \bn$, we have $\|\cq_1^{n_0}\|<1$. Also, $P_1\cq_1=\cq_1 P_1=0$.
Then by Theorem 5.2 of \cite{MA}, we have that the operator $T_1=P_1+\cq_1$ is uniformly $P_1$-ergodic.
Similarly, define the operators $P_2$ and $\cq_2$ on $\ell_1$ by
\[P_2(x)= (0, 0, x_3+x_4, 0, x_5+x_6, 0, x_7+x_8,0 \ldots),\ \text{and}\ \cq_2(x)=\frac{1}{2}\cq_1(x).\]
One can see that $P_2\cq_2=\cq_2 P_2=0$ and $P_2\leq^l P_1$, with $T_2=P_2+\cq_2$ is uniformly $P_2$-ergodic.
Also, define the operators $P_3$ and $\cq_3$ on $\ell_1$ by
\[P_3(x)= (0, 0, 0, 0, x_5+x_6, 0, x_7+x_8,0 \ldots),\ \text{and}\ \cq_3(x)=\frac{1}{3}\cq_1(x),\]
and having $P_3\cq_3=\cq_3 P_3=0$ and $P_3\leq^l P_2$, with $T_3=P_3+\cq_3$ is uniformly $P_2$-ergodic.
Fixing some $N_0\in \bn$ and using the same argument, for every $n,\ 1\leq n\leq N_0$, put
\[P_n(x)= (0, 0,\ldots , 0,\overbrace{ x_{2n-1}+x_{2n}}^{(2n-1)^{th}-place},0,x_{2n+1}+x_{2n+2}  , 0 \ldots),\]
and for $n>N_0$, put $P_n=P_{N_0}$. Then $\{P_n\}_{n=1}^\infty$ is a left decreasing sequence of projections on $\ca$, which converges to $P=P_{N_0}$.
For every $n\in \bn$, define $\cq_n=\frac{1}{n}\cq_1$. It is clear that $\cq_n^m\rightarrow 0$, as $m\rightarrow \infty$ and $\|\cq_n\|<\frac{1}{n}$. Therefore, using Theorem 5.2 of \cite{MA}, the operator $T_n=P_n+\cq_n$ is uniformly $P_n$-ergodic, $\forall n\in \bn$ .

Considering the NHDC $\{T^{k,n}\}$ generated by $T_n$,
noting $T_n\rightarrow P$, as $n\rightarrow \infty$, we have $\|T_n-P\|<
\frac{1}{n}$, and hence by Theorem \ref{TH.5.2} we deduce that
$\{T^{k,n}\}$ is uniformly $P$-ergodic.

\end{example}

%
%
%
%

\section{Weakly $P$-ergodicity and its application to perturbations}

Let $(X,X_+,\ck,f)$ be an abstract state space, $T$ be a Markov
operator on $X$ and let $P$ be a projection on $X$. In \cite{MA},
we have proved that the homogenous Markov chain $\{T^{n}\}$ is
uniformly $P$-ergodic if and only if $TP=P$ and $T=P+Q$, where $Q$
is an operator on $X$ such that $PQ=QP=0$ and $\|Q^{n_0}\|<1$, for
some $n_0\in \bn$. Moreover,
 \[\d_P(T)\leq \|Q\|\leq 2\d_P(T).\]

Let $\{P_n\}$ be a sequence of projections of $X$. Then a NDMC $\{T^{m,n}\}$ is called {\it weakly ergodic w.r.t $\{P_n\}$}
if for every $m> 0$ one has
$$
\lim_{n\to\infty}\d_{P_{m+1}}(T^{m,n})=0.
$$

In this section, we are going to establish similar kind of results
for NDMC. Before, to formulate the main result of this section,
one needs some auxiliary facts.

\begin{Lemma}\label{lem-6.2}
 Let $(X,X_+,\ck,f)$ be an abstract state space and $\{T_n\}$ be
a generating sequence of NDMC $\{T^{k,n}\}$. Let $\{P_n\}$ be a
left decreasing sequence of projections of $X$ such that
$P_nT_n=P_n, \forall n\in \mathbb{N}$. If $\{T^{m,n}\}$ is weakly ergodic w.r.t $\{P_n\}$, then
\[\d_{P_{l}}(T^{m,n})\rightarrow 0,\ \text{as}\ n
\rightarrow \infty ,\ \text{for all}\ l\geq m+1.\]
\end{Lemma}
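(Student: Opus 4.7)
The plan is to reduce the desired statement to the hypothesis (applied at a different index $m'$) by factoring $T^{m,n}$ and exploiting the submultiplicativity of $\d_{P_l}$ provided by Proposition \ref{dp12-less-1-2}. The case $l=m+1$ is exactly the hypothesis, so I will fix $l\geq m+2$ and $n\geq l$, and use the chain-rule property of the NDMC to decompose $T^{m,n}=T^{l-1,n}\,T^{m,l-1}$. The goal then becomes to prove $\d_{P_l}(T^{m,n})\leq \d_{P_l}(T^{l-1,n})$, since by the weak ergodicity hypothesis applied with $m':=l-1$ the quantity $\d_{P_l}(T^{l-1,n})=\d_{P_{(l-1)+1}}(T^{l-1,n})$ tends to $0$ as $n\to\infty$.

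To invoke Proposition \ref{dp12-less-1-2}, I need to verify the invariance $T^{m,l-1}(N_{P_l})\subseteq N_{P_l}$. The key observation is that for every index $k$ with $m+1\leq k\leq l-1$, Lemma \ref{PP}(i) gives $P_l\leq^\ell P_k$, i.e.\ $P_l P_k=P_l$, and combined with the standing hypothesis $P_k T_k=P_k$ this yields $P_l T_k=P_l P_k T_k=P_l P_k=P_l$. Iterating this identity through the product $T^{m,l-1}=T_{l-1}T_{l-2}\cdots T_{m+1}$, I obtain $P_l T^{m,l-1}=P_l$. Consequently, for every $x\in N_{P_l}$, $P_l(T^{m,l-1}x)=P_l x=0$, which is precisely the required invariance.

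Applying Proposition \ref{dp12-less-1-2} with $T_1:=T^{l-1,n}$ and $T_2:=T^{m,l-1}$ then yields $\d_{P_l}(T^{m,n})\leq \d_{P_l}(T^{l-1,n})\,\d_{P_l}(T^{m,l-1})\leq \d_{P_l}(T^{l-1,n})$, where the final inequality uses Theorem \ref{MA1-Thr. 3.7}(i) and the fact that $T^{m,l-1}$ is Markov. Letting $n\to\infty$ and applying the weak ergodicity hypothesis at $m'=l-1$ gives the conclusion. The only delicate step is the invariance $T^{m,l-1}(N_{P_l})\subseteq N_{P_l}$, which depends on the interplay between the left-decreasing property of $\{P_n\}$ and the fixing condition $P_nT_n=P_n$; the remainder is essentially bookkeeping with the generalized Dobrushin coefficient.
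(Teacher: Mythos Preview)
Your proof is correct and follows essentially the same approach as the paper: both factor $T^{m,n}$ through an intermediate index, establish $P_l T^{m,l-1}=P_l$ from the left-decreasing property together with $P_kT_k=P_k$, deduce the invariance $T^{m,l-1}(N_{P_l})\subseteq N_{P_l}$, and then apply the submultiplicativity of Proposition~\ref{dp12-less-1-2}. Your write-up is in fact slightly cleaner, since you verify the invariance directly and cite Proposition~\ref{dp12-less-1-2} explicitly, whereas the paper routes through Remark~\ref{PT-PTP} and Lemma~\ref{TNp-in-Npmeans} to reach the same conclusion.
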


\begin{proof}
The condition $P_nT_n=P_n$ ($\forall n\in\bn$) together with the
left consistency of $\{P_n\}$, one finds $\ P_{k}T_{n}=P_{k}$, for
all $k\geq n$. Moreover,
\begin{eqnarray*}
P_{m+l+1}T^{m, m+l-1} &=& P_{m+l+1}(T_{m+l-1}T_{m+l-2}\ldots T_{m+1}) \\
&=& P_{m+l+1}T_{m+l-2}\ldots T_{m+1}\\
&=& P_{m+l+1}.
\end{eqnarray*}

\noindent Therefore, by Remark \ref{PT-PTP} and Proposition
\ref{TNp-in-Npmeans}, we have
\begin{eqnarray*}
\d_{P_{m+l+1}}(T^{m,n}) &=&\d_{P_{m+l+1}} (T^{m+l,n}T^{m,m+l-1}) \\
&\leq & \d_{P_{m+l+1}} (T^{m+l,n}) \d_{P_{m+l+1}} (T^{m,m+l-1})\\
 &\leq & \d_{P_{m+l+1}} (T^{m+l,n}) \rightarrow 0\ \text{as}\ n \rightarrow \infty
\end{eqnarray*}
hence the lemma is proved.
\end{proof}

\begin{Lemma}\label{lem-6.3}
Let $(X,X_+,\ck,f)$ be an abstract state space and $T$ be an
operator on $X$. If $\{P_n\}$ is a sequence of projections such
that $P_n\rightarrow P$ in norm, then $\d_{P_n}(T)\rightarrow
\d_P(T)$.
\end{Lemma}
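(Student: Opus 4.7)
The plan is to transfer vectors between the two null spaces $N_P$ and $N_{P_n}$ via the projections themselves, exploiting the fact that $\|P_n-P\|\to 0$ to control the cost of the transfer. The key observation is that if $x\in N_P$ then $y_n:=x-P_nx$ lies in $N_{P_n}$, since $P_ny_n=P_nx-P_n^2x=0$; conversely, if $x_n\in N_{P_n}$ then $z_n:=x_n-Px_n$ lies in $N_P$. In each case, the displacement is small because $\|y_n-x\|=\|P_nx\|=\|(P_n-P)x\|\leq\|P_n-P\|\,\|x\|$ (using $Px=0$), and analogously for the other direction.

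First, I would establish $\d_P(T)\le \d_{P_n}(T)(1+\|P_n-P\|)+\|T\|\,\|P_n-P\|$. Given any $x\in N_P$ with $\|x\|=1$, form $y_n:=x-P_nx\in N_{P_n}$. Then $\|y_n\|\le 1+\|P_n-P\|$, and
\[
\|Tx\|\le\|Ty_n\|+\|T\|\,\|x-y_n\|\le\d_{P_n}(T)\,\|y_n\|+\|T\|\,\|P_n-P\|,
\]
from which the stated bound follows upon taking the supremum over such $x$. A symmetric argument, using $z_n=x_n-Px_n\in N_P$ for $x_n\in N_{P_n}$ with $\|x_n\|=1$, yields the reverse inequality
\[
\d_{P_n}(T)\le\d_P(T)(1+\|P_n-P\|)+\|T\|\,\|P_n-P\|.
\]

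Finally, letting $n\to\infty$ in both inequalities gives $\limsup_n\d_{P_n}(T)\le\d_P(T)$ and $\d_P(T)\le\liminf_n\d_{P_n}(T)$ (after rearranging the first), whence $\d_{P_n}(T)\to\d_P(T)$. The only technical point to be careful about is that $T$ is assumed bounded (as in the definition \eqref{Dbp}), which guarantees both $\|T\|<\infty$ and $\d_{P_n}(T)\le\|T\|$, so all quantities appearing in the estimates are finite; beyond that, the argument is essentially a routine continuity computation, and I expect no real obstacle.
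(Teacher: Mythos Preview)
Your proof is correct and follows essentially the same idea as the paper's: move vectors between $N_P$ and $N_{P_n}$ via $x\mapsto x-P_nx$ (and symmetrically $x_n\mapsto x_n-Px_n$), using $\|P_n-P\|\to 0$ to control the displacement. Your presentation is in fact tighter than the paper's, which only writes out one of the two inequalities before concluding $|\d_{P_n}(T)-\d_P(T)|\leq\epsilon$; your explicit two-sided estimate makes the argument complete.
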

\begin{proof}
 Given $\epsilon >0$, let $y=x-Px$, $\forall n\in \bn$, let $y_n=x-P_nx$ with $\|y_n\|\leq 1$. Then $y\in N_P$, $y_n\in N_{P_n}$ and $\|y\|\leq 1$. As $T$ is continuous, there exists $N_0\in \bn$ such that
  \[\| Ty_n-Ty\|< \epsilon ;\ \forall n>N_0.\]
  Therefore, $\forall n>N_0$ we have
  \[ \|Ty_n\|< \epsilon +\|Ty\|\leq \d_P(T) + \epsilon\]
  and then
  $\d_{P_n}(T)\leq \d_P(T) + \epsilon$, which implies that
  \[|\d_{P_n}(T)-\d_P(T)|\leq \epsilon ; \forall n \geq N_0\]
 this yields the required assertion.
\end{proof}

Now, we are ready to formulate the main result of this section.

\begin{Theorem}\label{Thr-6.1}
Let $(X,X_+,\ck,f)$ be an abstract state space, $\{P_n\}$ be a
left decreasing sequence of Markov projections of $X$ and
$\{T_n\}$ be a generation sequence of Markov operators of NDMC
$\{T^{m,n}\}$. Let $T_n\in \Sigma_{P_{n}}(X)$, for all
$n\in\bn$ with $T_{n}P_{n}=P_{n}$ and assume that
$T_{n}=P_{n}+Q_{n}$. Then the following statements are equivalent:
\begin{enumerate}
\item[(i)] for every $k\in \bn$, $\{T^{m,n}\}$ is weakly $P_k$-ergodic;
 \item[(ii)] $\{T^{m,n}\}$ is weakly ergodic w.r.t $\{P_n\}$;
 \item[(iii)] for every $m\in \bn$ one has
$$\| Q_{m+n}\ldots
Q_{m+1}\|\rightarrow 0, \ \ \textrm{as} \ \ n\rightarrow \infty.$$
Moreover,
\[ \d_{P_{m+1}}(T^{m,m+n})\leq \| Q_{m+n}\ldots  Q_{m+1}\| \leq 2\d_{P_{m+2}}(T^{m+1,m+n}).\]
\noindent Moreover, if $P_n\to P$ (in norm), then the above
statements are equivalent to \item[(iv)]  $\{T^{m,n}\}$ is weakly
$P$-ergodic.
\end{enumerate}
\end{Theorem}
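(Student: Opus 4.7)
The plan is to extract a key algebraic identity that reduces $T^{m,m+n}$ on $N_{P_{m+1}}$ to the pure product $Q_{m+n}\cdots Q_{m+1}$, and then to deduce all four equivalences from this identity together with the earlier results of Sections 3 and 6.

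First I would record that $T_n\in\Sigma_{P_n}(X)$ together with $T_nP_n=P_n$ force $P_nT_n=P_n$, whence the left-decreasing property (Lemma \ref{PP}(i)) gives $P_kT_n=P_k$ for all $k\geq n$. Since $Q_n=T_n-P_n$, this yields the crucial annihilation $P_kQ_n=0$ for all $k\geq n$. A one-line induction on $j$ then produces
\[
T^{m,m+j}x \;=\; Q_{m+j}\cdots Q_{m+1}x \qquad\text{for every } x\in N_{P_{m+1}},
\]
because at each stage $T_{m+j+1}=P_{m+j+1}+Q_{m+j+1}$ and the leading $P$-factor is killed by the preceding $Q$. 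From this identity the upper bound $\d_{P_{m+1}}(T^{m,m+n})\leq\|Q_{m+n}\cdots Q_{m+1}\|$ is immediate. For the matching lower bound, I would note that for arbitrary $x\in X$ one has $Q_{m+1}x\in N_{P_{m+2}}$ (again by $P_{m+2}Q_{m+1}=0$), so applying the reduction to the shifted chain yields $T^{m+1,m+n}(Q_{m+1}x)=Q_{m+n}\cdots Q_{m+2}(Q_{m+1}x)$, and therefore
\[
\|Q_{m+n}\cdots Q_{m+1}x\| \;\leq\; \d_{P_{m+2}}(T^{m+1,m+n})\,\|Q_{m+1}x\| \;\leq\; 2\,\d_{P_{m+2}}(T^{m+1,m+n})\,\|x\|,
\]
using $\|Q_{m+1}\|\leq\|T_{m+1}\|+\|P_{m+1}\|\leq 2$. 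Combined with Lemma \ref{lem-6.2}, which lets one shuttle between indices $m$ and $m+1$, this double inequality proves (ii)$\Leftrightarrow$(iii).

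For (i)$\Leftrightarrow$(ii), the implication (i)$\Rightarrow$(ii) is trivial (take $k=m+1$). Conversely, for $k\leq m+1$ Lemma \ref{PP}(i) gives $P_{m+1}\leq^\ell P_k$, so Proposition \ref{PQl} yields $\d_{P_k}(T^{m,n})\leq\d_{P_{m+1}}(T^{m,n})\to 0$; for $k\geq m+1$ Lemma \ref{lem-6.2} delivers $\d_{P_k}(T^{m,n})\to 0$ directly. Finally, under $P_n\to P$ in norm, Lemma \ref{PP}(ii) supplies $P\leq^\ell P_{m+1}$, so (iv)$\Rightarrow$(ii) is immediate from Proposition \ref{PQl}. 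For the converse (ii)$\Rightarrow$(iv), fix $m$ and $\epsilon>0$, choose $l\geq m+1$ with $\|P_l-P\|<\epsilon$, and for $x\in N_P$ split $x=P_lx+(x-P_lx)$. Since $Px=0$ we have $\|P_lx\|=\|(P_l-P)x\|<\epsilon\|x\|$, while $x-P_lx\in N_{P_l}$ with $\|x-P_lx\|\leq 2\|x\|$. Hence
\[
\|T^{m,n}x\| \;\leq\; \|T^{m,n}P_lx\|+\|T^{m,n}(x-P_lx)\| \;\leq\; \epsilon\|x\|+2\,\d_{P_l}(T^{m,n})\|x\|,
\]
and Lemma \ref{lem-6.2} forces $\d_{P_l}(T^{m,n})\to 0$, giving $\d_P(T^{m,n})\leq 3\epsilon$ for large $n$.

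The main obstacle is establishing the reduction identity $T^{m,m+n}x=Q_{m+n}\cdots Q_{m+1}x$ on $N_{P_{m+1}}$: it rests on the non-obvious relation $P_kQ_n=0$ for $k\geq n$, which must be teased out of $T_n\in\Sigma_{P_n}(X)$, $T_nP_n=P_n$, and left-consistency of $\{P_n\}$. After that, the proof is essentially bookkeeping with Proposition \ref{PQl} and Lemma \ref{lem-6.2}; the only additional subtlety is the $\epsilon$-approximation of $P$ by $P_l$ needed in (ii)$\Rightarrow$(iv).
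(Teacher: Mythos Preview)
Your argument is correct and follows the same overall architecture as the paper: (i)$\Leftrightarrow$(ii) via Proposition~\ref{PQl} and Lemma~\ref{lem-6.2}, (ii)$\Leftrightarrow$(iii) via an algebraic reduction of $T^{m,m+n}$ to the $Q$-product, and (iv)$\Leftrightarrow$(ii) via the left-consistency $P\leq^\ell P_n$. There are two tactical differences worth noting. First, the paper proves the full operator identity $Q_{m+n}\cdots Q_{m+1}=T^{m,m+n}-T^{m+1,m+n}P_{m+1}$ on all of $X$ by induction, whereas you establish the restricted identity $T^{m,m+n}x=Q_{m+n}\cdots Q_{m+1}x$ for $x\in N_{P_{m+1}}$ directly from the annihilation $P_kQ_n=0$ ($k\geq n$); your version is slightly quicker and already suffices for both inequalities in (iii). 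Second, for (ii)$\Rightarrow$(iv) the paper passes to the limit $l\to\infty$ and invokes Lemma~\ref{lem-6.3} (the continuity $\d_{P_n}(T)\to\d_P(T)$), while you give a direct $\epsilon$-splitting of $x\in N_P$ as $P_lx+(x-P_lx)$; your argument is more self-contained and sidesteps Lemma~\ref{lem-6.3} entirely. Neither difference changes the substance of the proof.
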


\begin{proof}
(i)$\Rightarrow$(ii): Obvious as by the assumption $\{T^{m,n}\}$ is weakly $P_{m+1}$-ergodic.\\
(ii)$\Rightarrow$(i): Let us consider the following two cases: (a) For $0<k\leq m+1$, we have $P_{m+1}\leq^\ell P_k$ and by Proposition \ref{PQl}, we get
\[\d_{P_k}(T^{m,n})\leq \d_{P_{m+1}}(T^{m,n}),\]
which tends to zero by the assumption. (b) For $k > m+1$, and using Lemma \ref{lem-6.2}, we have that
\[\d_{P_{k}}(T^{m,n})\rightarrow 0,\ \text{as}\ n\rightarrow \infty, \]
hence we deduced (i).

To establish the implication (ii)$\Rightarrow$(iii), let us
first prove the following equality
\begin{equation}\label{Eq-1}
Q_{m+n}\ldots Q_{m+1}= T^{m, m+n}-T^{m+1,m+n}P_{m+1}.
\end{equation}
by induction on $n$.

For $n=2$, by noticing that
\[P_{m+2}T_{m+1}=P_{m+2}P_{m+1}T_{m+1}=P_{m+2}P_{m+1}=P_{m+2}.\]
we find
\begin{eqnarray*}
Q_{m+2}Q_{m+1} &=& (T_{m+2}-P_{m+2}) (T_{m+1}-P_{m+1})\\
&=& T_{m+2}T_{m+1}-T_{m+2}P_{m+1}-P_{m+2}T_{m+1}+P_{m+2}P_{m+1}\\
&=& T_{m+2}T_{m+1}-T_{m+2}P_{m+1}\\
&=& T^{m,m+2}-T^{m+1,m+2}P_{m+1}.
\end{eqnarray*}

Now assume the statement is true for $k=n$ and let us establish it
for $k=n+1$: First notice that
\begin{eqnarray*}
P_{m+n+1}T^{m,m+n} &=& P_{m+n+1}T_{m+n}\ldots T_{m+1}\\
&=& P_{m+n+1}P_{m+n}T_{m+n}\ldots T_{m+1}\\
&=& P_{m+n+1}P_{m+n}T_{m+n-1}\ldots T_{m+1}\\
&=& P_{m+n+1}T_{m+n-1}\ldots T_{m+1}\\
& \vdots & \\
&=& P_{m+n+1}.
\end{eqnarray*}
Similarly,
\[P_{m+n+1}T^{m+1,m+n}P_{m+1}=P_{m+n+1}.\]
Therefore,
\begin{eqnarray*}
Q_{m+n+1}\ldots Q_{m+1} &=& Q_{m+n+1}(Q_{m+n}\ldots Q_{m+1} )\\
&=& Q_{m+n+1}Q_{m+n}(T^{m, m+n}-T^{m+1,m+n}P_{m+1})\\
&=& T_{m+n+1}T^{m,m+n}-T_{m+n+1}T^{m+1,m+n}P_{m+1}\\
&-& P_{m+n+1}T^{m,m+n}+P_{m+n+1}T^{m+1,m+n}P_{m+1}\\
&=& T^{m,m+n+1}-T^{m+1,m+n+1}P_{m+1},
\end{eqnarray*}
which yields Equation \eqref{Eq-1}.

(ii)$\Rightarrow$(iii): By \eqref{Eq-1}, one finds
\begin{eqnarray*}
\|Q_{m+n}\ldots Q_{m+1}\| &=& \|T^{m, m+n}-T^{m+1,m+n}P_{m+1}\|\\
&=& \|T^{m+1, m+n}T_{m+1}-T^{m+1,m+n}P_{m+1}\|\\
&=& \|T^{m+1, m+n}(T_{m+1}-P_{m+1})\|\\
&\leq & \d_{P_{m+1}}(T^{m+1, m+n}) \|T_{m+1}-P_{m+1}\|\\
&\leq & 2\d_{P_{m+1}}(T^{m+1, m+n}) \ \text{(as both are Markov operators)}\\
& \leq & 2\d_{P_{m+2}}(T^{m+1, m+n}) \ \text{(using Proposition \ref {PQl})},
\end{eqnarray*}
which tends to $0$ by (ii).\\
(iii)$\Rightarrow$(ii): Again using \eqref{Eq-1}, we have
\[T^{m,m+n}= \prod _{ j=0 }^{n-1  }{Q_{m+n-j}  }+T^{m+1,m+n}P_{m+1},\]
then
\begin{eqnarray*}
\d_{P_{m+1}}(T^{m,m+n}) &=& \sup_{x\in N_{P_{m+1}}} \| \prod _{ j=0 }^{n-1  }{Q_{m+n-j} (x) }+T^{m+1,m+n}P_{m+1}(x)\|\\
&=& \sup_{x\in N_{P_{m+1}}} \| \prod _{ j=0 }^{n-1  }{Q_{m+n-j}  (x)}\|\\
&=& \d_{P_{m+1}}(\prod _{ j=0 }^{n-1  }{Q_{m+n-j}})\\
&\leq & \| \prod _{ j=0 }^{n-1  }{Q_{m+n-j}}\|\rightarrow 0, \
\text{as}\ n\rightarrow \infty \ \text{by (iii)}.
\end{eqnarray*}

(iv)$\Rightarrow$(ii): As $\{P_n\}$ is a left decreasing sequence and $P_{n}\rightarrow P$, by (ii) of Lemma \ref{PP}, one concludes that
$P$ is a projections and $P\leq^\ell P_{n}$ for all $n\in \bn$.
Therefore, due to Proposition \ref{PQl} one gets
\[\d_{P_{m+1}}(T^{m, n})\leq \d_{P}(T^{m, n})\rightarrow 0, \ \text{as}\ n \rightarrow \infty .\]

(ii)$\Rightarrow$(iv): Assume that $\d_{P_{m+1}}(T^{m,
n})\rightarrow 0$ as $n \rightarrow \infty$, then by Lemma
\ref{lem-6.2}, for all $ l\geq m+1$ we have that
\[\d_{P_{l}}(T^{m,n})\rightarrow 0,\ \text{as}\ n
\rightarrow \infty .\] Letting $l\rightarrow \infty$ and using
Lemma \ref{lem-6.3}, one gets
 \[\d_{P}(T^{m,n})\rightarrow 0,\ \text{as}\ n
\rightarrow \infty  \]
 which proves (iv), and hence the proof is completed.
\end{proof}

\begin{example}
  Consider the space $\ell_1$  and for every $n\in \bn$ let
  $$ e_n=( \underbrace{0,0,\ldots ,1}_{n}, 0, \ldots ).$$
  Construct the one dimensional projections $P_n:=T_{e_n}$. Then $\{P_n\}$ is a left decreasing sequence of projections (see Example \ref{LCE1}).
  Take any $r\in (0,1/2)$, and for
  every $n\in \bn$, define $T_n:=P_n+Q_n$, where $Q_n=r(I-P_n)$. Therefore,
  \[ \| Q_{m+n}\ldots
Q_{m+1}\| = r^n\| (I-P_{m+n})\ldots (I-P_{m+1})\|= (2r)^n\to 0, \ \text{as}\ n\to \infty .
  \]
Hence by Theorem \ref{Thr-6.1}, the generated chain $\{T^{m,n}\}$ is weakly ergodic w.r.t. $\{P_n\}$.
\end{example}

Next, we will discuss a perturbations of weakly and uniformly
$P$-ergodic chains. We need the following auxiliary fact.

\begin{Lemma}\label{Prop-6.6}
Let $(X,X_+,\ck,f)$ be an abstract state space and let
$\{T_n\},\{S_n\}$ be two generating sequences of NDMCs
$\{T^{k,n}\},\{S^{k,n}\}$ on $X$, respectively. Assume that
$$\sum_{n=1}^{\infty}\|T_n-S_n\|<\infty,$$ then for any
$\epsilon>0$, there exists $m_0\in \bn$ such that
  \[\|T^{m,n}-S^{m,n}\|< \epsilon; \ \forall m\geq m_0,\ \forall n>m.\]
\end{Lemma}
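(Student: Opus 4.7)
My plan is to reduce the norm difference $\|T^{m,n}-S^{m,n}\|$ to a tail sum of the convergent series $\sum\|T_k-S_k\|$ via a standard one--factor--at--a--time telescoping trick. Concretely, for fixed $m<n$ and $k\in\{m,m+1,\ldots,n\}$, I would introduce the hybrid operators
\[
A_k \;=\; T_n T_{n-1}\cdots T_{k+1}\,S_k S_{k-1}\cdots S_{m+1},
\]
with the convention that the left product is empty when $k=n$ and the right product is empty when $k=m$. Then $A_m=T^{m,n}$ and $A_n=S^{m,n}$.

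The key identity I would verify is the telescoping decomposition
\[
T^{m,n}-S^{m,n} \;=\; \sum_{k=m+1}^{n}(A_{k-1}-A_k) \;=\; \sum_{k=m+1}^{n} T^{k,n}\,(T_k-S_k)\,S^{m,k-1},
\]
which follows directly from
\[
A_{k-1}-A_k \;=\; T^{k,n}(T_k-S_k)\,S^{m,k-1}.
\]
Since every $T^{k,n}$ and $S^{m,k-1}$ is a Markov operator on $X$, hence of norm $1$, the triangle inequality yields
\[
\|T^{m,n}-S^{m,n}\| \;\leq\; \sum_{k=m+1}^{n}\|T_k-S_k\|.
\]

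Finally, the hypothesis $\sum_{n=1}^{\infty}\|T_n-S_n\|<\infty$ means that the tail $\sum_{k=m+1}^{\infty}\|T_k-S_k\|$ tends to $0$ as $m\to\infty$. Given $\varepsilon>0$ I would choose $m_0$ so that $\sum_{k=m_0+1}^{\infty}\|T_k-S_k\|<\varepsilon$; for every $m\geq m_0$ and every $n>m$ the previous estimate then gives $\|T^{m,n}-S^{m,n}\|<\varepsilon$, which is what is claimed. There is no genuine obstacle here: the whole proof rests on the telescoping identity and the fact that Markov operators have unit norm, and the choice of $m_0$ is just the Cauchy tail of a convergent series.
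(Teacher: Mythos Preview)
Your argument is correct. The telescoping decomposition
\[
T^{m,n}-S^{m,n}=\sum_{k=m+1}^{n}T^{k,n}(T_k-S_k)S^{m,k-1}
\]
is valid (with the natural convention that $T^{n,n}$ and $S^{m,m}$ are the identity), and since each $T^{k,n}$ and $S^{m,k-1}$ is Markov, hence of norm $1$, you obtain the bound $\|T^{m,n}-S^{m,n}\|\le\sum_{k=m+1}^{n}\|T_k-S_k\|$, from which the conclusion follows by taking a tail of the convergent series.

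The paper proceeds differently: it writes $T_k=S_k+R_k$ with $R_k=T_k-S_k$, expands the full product $\prod(S_k+R_k)$, and collects all cross-terms into a remainder $R_{m,n}$ satisfying
\[
\|R_{m,n}\|\le\prod_{i=m+1}^{n}(1+r_i)-1,\qquad r_i=\|R_i\|,
\]
and then uses the convergence of the infinite product $\prod(1+r_i)$ (equivalent to $\sum r_i<\infty$) to make the tail product close to $1$. Your one-factor-at-a-time telescoping is more elementary and in fact yields a sharper estimate, since $\sum_{i=m+1}^{n}r_i\le\prod_{i=m+1}^{n}(1+r_i)-1$. The paper's product expansion, on the other hand, is more robust if one wanted to allow the intermediate factors to have norm merely bounded rather than equal to $1$; but in the present Markov setting your approach is both simpler and tighter.
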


\begin{proof}
  For every $n\in \bn$, let $R_n=T_n-S_n$, and put $r_n=\|R_n\|$. Then
\begin{eqnarray*}
T^{m,n}&=&\prod_{j=0}^{n-m-1}T_{n-j}\\[2mm]
&=&\prod_{j=0}^{n-m-1}(S_{n-j}+R_{n-j})\\[2mm]
&=&\prod_{j=0}^{n-m-1}S_{n-j}+R_{m,n}=S^{m,n}+R_{m,n},
\end{eqnarray*}
where $R_{m,n}$ contains all possible products of $S_i$ and $R_i$
and keeping in mind $\|S_i\|=1$ for all $i$, one has
\begin{eqnarray*}\|R_{m,n}\|&\leq& \sum_i r_i + \sum_{i,j} r_ir_j +\sum_{i,j,k} r_ir_jr_k + \cdots  + \prod_{i=m+1}^{n}r_i\\[2mm]
&= &\prod_{i=m+1}^{n}(1+r_i)-1.
\end{eqnarray*}
  As $\sum_i r_i<\infty$, the product $\prod_{i=1}^{n}(1+r_i)$ converges. Hence, for any $\epsilon >0$, there exists $m_0\in \bn$
   such that $\|R_{m,n}\|<\epsilon, \forall m\geq m_0$ and $\forall n>m$, which completes the proof.
\end{proof}

Next result is about perturbations of weakly and uniformly
$P$-ergodicities of NDMC.

\begin{Theorem}\label{TSw}
Let $(X,X_+,\ck,f)$ be an abstract state space, $\{P_n\}$ be a
left decreasing sequence of Markov projections of $X$ which
converges to $P$ (in norm). Let $\{T_n\},\{S_n\}$ be two
generating sequences of NDMCs $\{T^{k,n}\},\{S^{k,n}\}$ on $X$,
respectively. Assume that $\forall n\in \bn$, $T_n, S_n\in
\Sigma_{P_{n}}(X)$ with $T_{n}P_{n}=P_{n}$, $S_{n}P_{n}=P_{n}$ and
$\sum_{n=1}^{\infty}\|T_n-S_n\|<\infty$. Then the following
statements hold:
\begin{enumerate}
\item[(i)] $\{T^{m, n}\}$ is uniformly $P$-ergodic if and only if
$\{S^{m, n}\}$ is uniformly $P$-ergodic; \item[(ii)] $\{T^{m,
n}\}$ is weakly $P$-ergodic if and only if $\{S^{m, n}\}$ is
weakly $P$-ergodic.
\end{enumerate}
\end{Theorem}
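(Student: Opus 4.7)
The plan is to exploit Lemma \ref{Prop-6.6} to transfer estimates between the two chains on a tail $m \geq m_0$, and then use the factorization $T^{m,n} = T^{m_0,n}\,T^{m,m_0}$ to push the conclusion down to all small $m$. Both statements are symmetric in $T$ and $S$, so I would prove only one direction of each equivalence. Given $\epsilon>0$, Lemma \ref{Prop-6.6} supplies $m_0 \in \mathbb{N}$ with $\|T^{m,n} - S^{m,n}\|<\epsilon$ for every $m \geq m_0$ and every $n>m$. The key auxiliary observation is that since $\{P_n\}$ is left decreasing with $P_n \to P$, Lemma \ref{PP}(ii) gives $P \leq^\ell P_n$, i.e.\ $PP_n = P$; combined with $P_n T_n = P_n$ this yields $P T_n = P P_n T_n = P P_n = P$ for every $n$, and by iteration
\[
P\,T^{k,\ell} = P \qquad \text{for all } k<\ell.
\]

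For (i), suppose $\{S^{m,n}\}$ is uniformly $P$-ergodic. For $m \geq m_0$ the triangle inequality $\|T^{m,n}-P\| \leq \|T^{m,n}-S^{m,n}\| + \|S^{m,n}-P\|$ gives $\|T^{m,n}-P\| \to 0$. For $m < m_0$, write $T^{m,n} = T^{m_0,n}\,T^{m,m_0}$; using $PT^{m,m_0}=P$ we obtain
\[
T^{m,n} - P = \bigl(T^{m_0,n} - P\bigr)T^{m,m_0},
\]
so $\|T^{m,n}-P\| \leq \|T^{m_0,n}-P\| \to 0$. The reverse implication is symmetric.

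For (ii), I would invoke Theorem \ref{MA1-Thr. 3.7}(ii), which yields $|\delta_P(T^{m,n}) - \delta_P(S^{m,n})| \leq \|T^{m,n}-S^{m,n}\|$. For $m \geq m_0$ this immediately transports weak $P$-ergodicity from one chain to the other. For $m < m_0$ the identity $PT^{m,m_0} = P$ together with Remark \ref{PT-PTP} and Lemma \ref{TNp-in-Npmeans} implies $T^{m,m_0}(N_P) \subseteq N_P$, so Proposition \ref{dp12-less-1-2} gives
\[
\delta_P(T^{m,n}) \leq \delta_P(T^{m_0,n})\,\delta_P(T^{m,m_0}) \leq \delta_P(T^{m_0,n}) \to 0.
\]
The main obstacle is the handling of the initial segment $m < m_0$: Lemma \ref{Prop-6.6} gives no control there, so everything hinges on the factorization trick, which in turn demands the identity $PT_n = P$. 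This is precisely what the combination of the left consistency $P\leq^\ell P_n$ (obtained from $P_n \to P$) with the hypothesis $P_n T_n = P_n$ provides, making the norm-convergence assumption on $\{P_n\}$ essential for the argument.
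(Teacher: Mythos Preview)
Your argument is correct. For part~(i) it is essentially the paper's proof: both use Lemma~\ref{Prop-6.6} on the tail $m\ge m_0$ and then the factorization $T^{m,n}=(T^{m_0,n})T^{m,m_0}$ together with $PT^{m,m_0}=P$ (equivalently $PS^{k,m_0-1}=P$ in the paper) to handle the finitely many small indices.

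For part~(ii), however, your route is genuinely different and considerably shorter. The paper does \emph{not} work with $\delta_P$ directly; instead it passes through Theorem~\ref{Thr-6.1}, rewriting weak $P$-ergodicity as $\|Q_{m+n}\cdots Q_{m+1}\|\to 0$ with $Q_j=T_j-P_j$, invoking the identity \eqref{Eq-1} $Q_{m+n}\cdots Q_{m+1}=T^{m,m+n}-T^{m+1,m+n}P_{m+1}$ (and the analogous one for $\tilde Q_j=S_j-P_j$), and then estimating $\|Q_{m+n}\cdots Q_{m+1}-\tilde Q_{m+n}\cdots\tilde Q_{m+1}\|$ through a splitting that brings in $\delta_P(T^{m_0,m+n})$. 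You bypass all of this by using the Lipschitz bound $|\delta_P(T^{m,n})-\delta_P(S^{m,n})|\le\|T^{m,n}-S^{m,n}\|$ from Theorem~\ref{MA1-Thr.\ 3.7}(ii) for $m\ge m_0$, and then the submultiplicativity $\delta_P(T^{m,n})\le\delta_P(T^{m_0,n})$ from Proposition~\ref{dp12-less-1-2} (via $PT^{m,m_0}=P\Rightarrow T^{m,m_0}(N_P)\subseteq N_P$) for $m<m_0$. One small point of presentation: for $m\ge m_0$ the inequality only yields $\limsup_n\delta_P(T^{m,n})\le\epsilon$, not $=0$; the conclusion $\delta_P(T^{m,n})\to0$ follows because $\epsilon>0$ was arbitrary, so you should make the final ``$\epsilon$ arbitrary'' step explicit. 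Your approach buys simplicity and avoids the auxiliary $Q_n$-machinery; the paper's approach, on the other hand, exhibits the equivalence with the $Q$-product decay, which is of independent interest for the examples that follow.
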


\begin{proof} (i).  Given any $\i >0$. As $\{T^{k,n}\}$ is uniformly
$P$-ergodic, there is $N_0\in\bn$ such that
$$
\|T^{k,n}-P\|<\i, \ \ \ \ n\geq N_0.
$$
On the other hand, by Proposition \ref{Prop-6.6}, there exists
$m_{0}\in \bn$ such that
 \begin{eqnarray}\label{TSw01}
\|T^{k,n}-S^{k,n}\|< \i, \ \ \forall k\geq m_0,\ \forall
n>k.\end{eqnarray} Therefore, if $k\geq m_0$, then one finds
 \begin{eqnarray}\label{TSw1}
 \|S^{k,n}-P\|\leq \|T^{k,n}-S^{k,n}\| + \|T^{k,n}-P\|<2\i\ \
 \ \forall n>N_0.
 \end{eqnarray}
 Now, if $0\leq k\leq m_0-1$, then by noticing $PS^{k,n}=P$ and \eqref{TSw01}, we
 find
 \begin{eqnarray*}
 \|S^{k,n}-P\|&=&\|S^{m_0,n}S^{k,m_0-1}-PS^{k,m_0-1}\|\\
 &=&
 \|(S^{m_0,n}-P)S^{k,m_0-1}\|\\
 &\leq &\|S^{m_0,n}-P\|<2\i
 \ \ \forall n>N_0.
 \end{eqnarray*}
This shows that  $\{S^{m, n}\}$  is uniformly $P$-ergodic. The
reverse can be proved by the same argument.

(ii). Assume that $\{T^{m, n}\}$ is weakly $P$-ergodic.
  For every $n\in \bn$, let us denote $Q_n=T_n-P_n$ and $\tilde{Q}_n=S_n-P_n$. By Theorem \ref{Thr-6.1}, the weak $P$-ergodicity of  $\{T^{m, n}\}$ implies
  \begin{equation}\label{Eq-13}
  \| Q_{m+n}\ldots  Q_{m+1}\|\rightarrow 0 , \ \text{as}\ n\rightarrow \infty ;\ \forall m\in \bn .
  \end{equation}
  Let us establish
     \begin{equation}\label{Eq-14}
  \| \tilde{Q}_{m+n}\ldots  \tilde{Q}_{m+1}\|\rightarrow 0 , \ \text{as}\ n\rightarrow \infty ;\ \forall m\in \bn .
  \end{equation}
  Using \eqref{Eq-1}, we have
\begin{eqnarray*}
&&Q_{m+n}\ldots Q_{m+1}= T^{m, m+n}-T^{m+1,m+n}P_{m+1},\\[2mm]
&&\tilde{Q}_{m+n}\ldots \tilde{Q}_{m+1}= S^{m,
m+n}-S^{m+1,m+n}P_{m+1} .\end{eqnarray*}

According to the weak $P$-ergodicity of $\{T^{k,n}\}$, there is
$N_1\in\bn$ such that $$\d_P(T^{m_0,m+n})<\i, \forall n\geq N_1.$$
Hence, by \eqref{TSw01} one finds
 \begin{eqnarray*}
 \|Q_{m+n}\ldots Q_{m+1}- \tilde{Q}_{m+n}\ldots \tilde{Q}_{m+1}\| &\leq & \|T^{m,m+n}-S^{m,m+n}\|
 \\[2mm]
 &&+ \|T^{m+1,m+n}-S^{m+1,m+n}\|\\[2mm]
  &\leq & \|T^{m_0,m+n}T^{m,m_0-1}-T^{m_0,m+n}S^{m,m_0-1}\|\\
 && + \|S^{m_0,m+n}S^{m,m_0-1}-T^{m_0,m+n}S^{m,m_0-1}\|
 \\[2mm]
 &&+ \|T^{m_0,m+n}T^{m+1,m_0-1}-T^{m_0,m+n}S^{m+1,m_0-1}\|\\
 &&+\|S^{m_0,m+n}S^{m+1,m_0-1}-T^{m_0,m+n}S^{m+1,m_0-1}\|\\[2mm]
  &\leq & \|T^{m_0,m+n}(T^{m,m_0-1}-S^{m,m_0-1})\|\\
 && + \|(S^{m_0,m+n}-T^{m_0,m+n})S^{m,m_0-1}\|
 \\[2mm]
 &&+ \|T^{m_0,m+n}(T^{m+1,m_0-1}-S^{m+1,m_0-1})\|\\
 &&+\|(S^{m_0,m+n}-T^{m_0,m+n})S^{m+1,m_0-1}\|\\[2mm]
  &\leq & 4\d_P(T^{m_0,m+n}) + 2\|S^{m_0,m+n}-T^{m_0,m+n}\|
 \\[2mm]
  & <&  6\i ,\ \ \ \textrm{for all} \ \  n>\max\{N_0,N_1\}.
 \end{eqnarray*}
 Consequently, by \eqref{Eq-13} we arrive at \eqref{Eq-14}, which yields that $\{S^{m, n}\}$
  is weakly $P$-ergodic.
\end{proof}

\begin{remark}
We stress that if one considers, as a particular case, the left
consistence sequence $\{P_n\}$ defined in Example \ref{LCE1}, then
the results of Theorem \ref{TSw} is even new in the
non-commutative setting. When $X=L^1$ and $P$ is a one dimensional
projection, in the mentioned setting, an analogue of Theorem
\ref{TSw} is known as the classical result \cite{Paz}.
\end{remark}

Let us provide some examples as application of the proved theorem.

\begin{example}\label{Ex.6.7}
Let $(X,X_+,\ck,f)$ be an abstract state space and $\{P_n\}$ be a
left decreasing sequence of Markov projections of $X$ which
converges to $P$ (in norm). Assume that $\{T^{k,n}\}$ be weakly
(resp. uniformly) $P$-ergodic NDMC which is generated by a
sequence of Markov operators $\{T_n\}_{n=1}^\infty$ such that
${T}_nP_n=P_n{T}_n=P_n, \forall n\in\bn$. Let $\{\l_n\} \subset
(0,1)$ such that
 \begin{equation}\label{1SnT}
\sum_{n=1}^\infty (1-\l_n)<\infty.
\end{equation}
Let us define a sequence $\{S_n\}$ of Markov operators by
$$
S_n=(1-\l_n)P_n+ \l_n{T}_n.
$$
It is clear that ${S}_nP_n=P_n{S}_n=P_n$, and we have
 \begin{eqnarray*}
   \|S_n-T_n\|=|1-\l_n|\|P_n-T_n\|\leq 2(1-\l_n)
    \end{eqnarray*}
which due to \eqref{1SnT} implies
 \begin{equation*}
\sum_{n=1}^\infty\|S_n-T_n\|<\infty.
 \end{equation*}
Hence, by Theorem \ref{TSw} we conclude that the NDMC
$\{S^{k,n}\}$ is weakly (resp.
uniformly) $P$-ergodic.
\end{example}

Now, let us provide a concrete example of uniformly $P$-ergodic
NDMC.

\begin{example}
Let $(X,X_+,\ck,f)$ be an abstract state space, and let $\{P_n\}$ be a
left decreasing sequence of Markov projections of $X$ which
converges to $P$ (in norm). Assume that
$\{\tilde{T}_n\}_{n=1}^\infty$ is a sequence of Markov operators
such that $\tilde{T}_nP_n=P_n\tilde{T}_n=P_n$. Let $\{\a_n\}_{n=1}^\infty
\subseteq (0,1)$ be such that
 \begin{equation}\label{2SnT}
\sum_{n=1}^\infty (1-\a_n) \ \ \textrm{diverges}.
\end{equation}
Let us define the sequence $\{T_n\}$ by
 \[ T_n=(1-\a_n)P+ \a_n\tilde{T}_n. \]
 Then, we have
 \begin{eqnarray*}
   T_nT_m &=& ((1-\a_n)P+ \a_n\tilde{T}_n)( (1-\a_m)P+ \a_m\tilde{T}_m) \\
    &=& (1-\a_n)(1-\a_m)P + \a_m(1-\a_n)P\tilde{T}_m + \a_n(1-\a_m)\tilde{T}_mP + \a_n\a_m\tilde{T}_n\tilde{T}_m \\
    &=& (1-\a_n\a_m)P+ \a_n\a_m\tilde{T}_n\tilde{T}_m.
 \end{eqnarray*}
 Hence $\forall k<n$
 \begin{equation*}
   T^{k,n} = T_nT_{n-1}\ldots T_{k+1}= \bigg(1-\prod_{j=k+1}^{n}\a_j\bigg)P+ \bigg(\prod_{j=k+1}^{n}\a_j\bigg)\tilde{T}_n\ldots \tilde{T}_{k+1}.
 \end{equation*}
    Therefore, due to \eqref{2SnT}, one finds
   \begin{equation*}
  \| T^{k,n}-P\| \leq \bigg(\prod_{j=k+1}^{n}\a_j\bigg) \|P-\tilde{T}_n\ldots \tilde{T}_{k+1}\|\leq 2\prod_{j=k+1}^{n}\a_j  \to
  0,
 \end{equation*}
as $n\to \infty$, which shows that $\{T^{k,n}\}$ is uniformly
$P$-ergodic.

Assume that a sequence $\{\b_n\}_{n=1}^\infty \subseteq (0,1)$ satisfies
\begin{equation}\label{3SnT}
\sum_{n=1}^\infty |\b_n-\a_n|<\infty.
\end{equation}
It is obvious that this together with \eqref{2SnT} yields that
 \begin{equation*}
\sum_{n=1}^\infty (1-\b_n) \ \ \textrm{diverges}.
\end{equation*}

Due to $P_n\to P$, we suppose (if it is needed by taking
subsequence) that
 \begin{equation}\label{4SnT}
\|P_n-P\|\leq \frac{1}{(1-\b_n)n^{1+\g}}, \ \ \textrm{for some} \
\ \g>0.
\end{equation}

 Now, let
us define the sequence $\{S_n\}$ as follows:
$$
S_n:=(1-\b_n)P_n+ \b_n\tilde{T}_n.
$$
It is clear that $\forall n\in\bn$, the operator $S_n$ is Markov
and $S_nP_n=P_nS_n=P_n$. Moreover,
\begin{eqnarray*}
S_n-T_n&=&(P_n-P)+(\a_nP-\b_nP_n)+(\b_n-\a_n)\tilde{T}_n\\[2mm]
&=&(P_n-P)+(\a_nP-\b_nP)+(\b_nP-\b_nP_n)+(\b_n-\a_n)\tilde{T}_n\\[2mm]
&=&(1-\b_n)(P_n-P)+(\a_n-\b_n)(P-T_n)
\end{eqnarray*}
and due to \eqref{3SnT},\eqref{4SnT}, we infer that
\begin{eqnarray*}
\sum_{n=1}^\infty\|S_n-T_n\|&\leq&
\sum_{n=1}^\infty(1-\b_n)\|P_n-P\|+\sum_{n=1}^\infty|\a_n-\b_n|\|P-T_n\|\\[2mm]
&\leq &
\sum_{n=1}^\infty\frac{1}{n^{1+\g}}+2\sum_{n=1}^\infty|\a_n-\b_n|<\infty .
\end{eqnarray*}
Hence, by Theorem \ref{TSw} we conclude that the NDMC
$\{S^{k,n}\}$ associated with $\{S_n\}$ is uniformly $P$-ergodic.
\end{example}

The above theorem can be generalized as follows:

\begin{Theorem}
Let $(X,X_+,\ck,f)$ be an abstract state space $\{P_n\}$ and
$\{\bar{P}_n\}$ be left decreasing sequences of Markov projections
of $X$ such that $\|P_{n}-\bar{P}_{n}\|\rightarrow 0$, and
$P_{n}\rightarrow P$ as $n\rightarrow \infty$. Suppose that the
generating sequences $\{T_n\}_{n=1}^\infty\subseteq \Sigma_{P_{n}}(X)$,
$\{S_n\}_{n=1}^\infty\subseteq \Sigma_{\bar{P}_{n}}(X)$ such that
$T_{n}P_{n}=P_{n}$, $S_{n}\bar{P}_{n}=\bar{P}_{n}$ and
$\sum_{n=1}^{\infty}\|T_n-S_n\|<\infty$. Then $\{T^{m, n}\}$ is
weakly $P$-ergodic if and only if $\{S^{m, n}\}$ is weakly
$P$-ergodic.
\end{Theorem}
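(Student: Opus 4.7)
The proof will follow the strategy of part (ii) of Theorem \ref{TSw}, with an extra step to bridge the two projection sequences. First, since $\|P_n-\bar{P}_n\|\to 0$ and $P_n\to P$ in norm, the triangle inequality gives $\bar{P}_n\to P$ in norm as well. As $\{\bar{P}_n\}$ is left decreasing, Lemma \ref{PP}(ii) yields $P\le^\ell \bar{P}_n$ for every $n$; similarly $P\le^\ell P_n$. Combining these with the hypotheses $T_nP_n=P_n$ and $S_n\bar{P}_n=\bar{P}_n$ produces
$$PT_n=PP_nT_n=PP_n=P,\qquad PS_n=P\bar{P}_nS_n=P\bar{P}_n=P,$$
and iterating these identities gives $PT^{m,n}=P$ and $PS^{m,n}=P$ for all $m<n$.

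Suppose $\{T^{m,n}\}$ is weakly $P$-ergodic, and fix $\varepsilon>0$. By Lemma \ref{Prop-6.6} there exists $m_0\in\bn$ such that $\|T^{m,n}-S^{m,n}\|<\varepsilon$ for all $m\ge m_0$ and $n>m$. For such $m$, Theorem \ref{MA1-Thr. 3.7}(ii) yields
$$\d_P(S^{m,n})\le \d_P(T^{m,n})+\|T^{m,n}-S^{m,n}\|<\d_P(T^{m,n})+\varepsilon.$$
Letting $n\to\infty$ and then $\varepsilon\to 0$ we obtain $\d_P(S^{m,n})\to 0$ for every $m\ge m_0$.

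For $0\le m<m_0$ and $n>m_0$ I split the chain via $S^{m,n}=S^{m_0,n}S^{m,m_0}$. From $PS^{m,m_0}=P$ and Remark \ref{PT-PTP}, Lemma \ref{TNp-in-Npmeans} shows that $S^{m,m_0}(N_P)\subseteq N_P$, so Proposition \ref{dp12-less-1-2} yields
$$\d_P(S^{m,n})\le \d_P(S^{m_0,n})\,\d_P(S^{m,m_0})\le \d_P(S^{m_0,n}),$$
which tends to $0$ by the preceding step. Thus $\{S^{m,n}\}$ is weakly $P$-ergodic; the reverse implication is obtained by interchanging the roles of $T$ and $S$.

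The only delicate point is the propagation of $PS^{m,m_0}=P$ through a product of operators $S_k$ that are each attached to a \emph{different} projection $\bar{P}_k$; this is exactly what the left-consistency $P\le^\ell \bar{P}_n$, established in the opening step from $\bar{P}_n\to P$, delivers. Once that is in hand, the rest is a direct adaptation of the Theorem \ref{TSw} argument.
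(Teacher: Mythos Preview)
Your argument is correct and takes a genuinely different, more direct route than the paper. The paper proceeds through the auxiliary products $Q_n=T_n-P_n$ and $\tilde Q_n=S_n-\bar P_n$, using identity~\eqref{Eq-1} to write $Q_{m+n}\cdots Q_{m+1}=T^{m,m+n}-T^{m+1,m+n}P_{m+1}$ and the analogous formula for $\tilde Q$, bounds the difference of these products by $\|T^{m,m+n}-S^{m,m+n}\|+\|T^{m+1,m+n}-S^{m+1,m+n}\|+\|P_{m+1}-\bar P_{m+1}\|$, and then refers back to the detailed estimate in the proof of Theorem~\ref{TSw}(ii). You bypass the $Q$-product machinery (and hence Theorem~\ref{Thr-6.1}) entirely: once $P\le^\ell P_n$ and $P\le^\ell\bar P_n$ give $PT_n=PS_n=P$, the single inequality $|\d_P(S^{m,n})-\d_P(T^{m,n})|\le\|S^{m,n}-T^{m,n}\|$ from Theorem~\ref{MA1-Thr. 3.7}(ii), together with the sub\-multiplicativity step $\d_P(S^{m,n})\le\d_P(S^{m_0,n})$ via Proposition~\ref{dp12-less-1-2}, already closes the argument. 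Your approach is shorter and more transparent; the paper's approach has the advantage of displaying explicitly how the perturbation of each generator and of each projection enters the estimate.

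One small organisational point: the sentence ``Letting $n\to\infty$ and then $\varepsilon\to 0$ we obtain $\d_P(S^{m,n})\to 0$ for every $m\ge m_0$'' is not quite right as written, because $m_0=m_0(\varepsilon)$ moves when $\varepsilon\to 0$. The clean ordering is: fix $m$; given $\varepsilon>0$ choose $m_0$; whether $m\ge m_0$ (direct bound) or $m<m_0$ (split $S^{m,n}=S^{m_0,n}S^{m,m_0}$ and use $\d_P(S^{m,n})\le\d_P(S^{m_0,n})$), one gets $\limsup_n\d_P(S^{m,n})\le\varepsilon$; only then let $\varepsilon\to 0$. All your ingredients are correct, only the moment at which $\varepsilon$ is sent to zero needs to be postponed until after the splitting step.
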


\begin{proof}
As $\|P_{n}-\bar{P}_{n}\|\rightarrow 0$ and $P_{n}\rightarrow P$, we have $\bar{P}_{n}\rightarrow P$.
  For every $n\in \bn$, let us denote $Q_n=T_n-P_n$ and $\tilde{Q}_n=S_n-\bar{P}_n$.
  Using \eqref{Eq-1}, we have
 \begin{eqnarray*}
 &&Q_{m+n}\ldots Q_{m+1}= T^{m, m+n}-T^{m+1,m+n}P_{m+1},\\[2mm]
&& \tilde{Q}_{m+n}\ldots \tilde{Q}_{m+1}= S^{m,
m+n}-S^{m+1,m+n}\bar{P}_{m+1}.
\end{eqnarray*}
 Now, given any $\epsilon >0$, by Proposition \ref{Prop-6.6}, there exists $m_{0}\in \bn$ such that
 \[\|T^{m,n}-S^{m,n}\|< \epsilon; \ \forall m\geq m_0,\ \forall n>m.\]
 Also, there exists $m_{1}\in \bn$ such that
 \[\|P_{m}-\bar{P}_{m} \|< \epsilon , \ \forall m\geq m_{1}.\]
 Therefore,
 \begin{eqnarray*}
 \|Q_{m+n}\ldots Q_{m+1}- \tilde{Q}_{m+n}\ldots \tilde{Q}_{m+1}\| &\leq & \|T^{m,m+n}-S^{m,m+n}\| \\
 &+& |T^{m+1,m+n}P_{m+1}-S^{m+1,m+n}\bar{P}_{m+1}\| \\
 & \leq & \|T^{m,m+n}-S^{m,m+n}\|\\
 &+& \|T^{m+1,m+n}-S^{m+1,m+n}\| \\
 &+&\|P_{m+1}-\bar{P}_{m+1} \|.
 \end{eqnarray*}
Repeating the argument of the proof of Theorem \ref{TSw}, we
obtain the desired assertion.
\end{proof}

%
%
%
%
%

\section{Bair category results}

In this section, we are going to prove Bair category results to
describe the size of uniformly $P$-ergodic and weakly ergodic
w.r.t. $\{P_n\}$ NDMC's.

In the sequel, we will identify the generating sequence
$\ct=\{T_n\}_{n=1}^\infty$ of Markov operators with the
corresponding NDMC $\{T^{k,n}\}$.

Let $(X,X_+,\ck,f)$ be an abstract state space, and let $\{P_n\}$
be a left decreasing sequence of Markov projections of $X$. Let us
define
\begin{eqnarray*}
&&\frak{S}_{\{P_n\}}(X):= \big\{\ct=\{T_n\}_{n=1}^\infty: \ \
T_n\in
\Sigma_{P_n}(X)\ \text{and}\ T_nP_n=P_n, \forall n\in \bn \big\},\\
&&\frak{S}_{\{P_n\}}^w(X):= \big\{\ct\in \frak{S}_{\{P_n\}}(X): \
\ \{T^{m,n}\}\ \text{is weakly ergodic w.r.t.} \{P_n\} \big\}.
\end{eqnarray*}


 If $P$ is a any projection of $X$, then we define:
\begin{eqnarray*}
&&\frak{S}_{P}(X):=\big\{\ct=\{T_n\}_{n=1}^\infty: \ \ T_n\in
\Sigma_{P}(X)\ \text{and}\ T_nP=P, \forall n\in \bn \big\}, \\
&&\frak{S}_{P}^u(X):= \big\{\ct\in \frak{S}_{P}(X) \ \
\{T^{m,n}\}\ \text{is uniformly $P$-ergodic}\big\}.
\end{eqnarray*}

 The sets $\frak{S}_{\{P_n\}}(X)$ and $\frak{S}_{P}(X)$ are endowed with the following metric
\[d(\ct,\cs):= \sum_{n=1}^\infty \frac{\|T_n-S_n\|}{2^n}, \ \ \ct=\{T_n\}, \cs=\{S_n\}.\]

\noindent Then let us prove the following density theorems.

\begin{Theorem}
  Let $(X,X_+,\ck,f)$ be an abstract state space, and let $P$ be a Markov projection of $X$. Then $\frak{S}_{P}^u(X)$ is a dense subset of $\frak{S}_{P}(X)$.
\end{Theorem}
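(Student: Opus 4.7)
The plan is to perturb a given generating sequence $\ct=\{T_{n}\}\in \frak{S}_{P}(X)$ by dragging each $T_{n}$ slightly toward the projection $P$, in such a way that the perturbation is summable in the metric $d$ while the products of the modified operators still converge uniformly to $P$. Concretely, given $\i>0$, I will fix a sequence $\{\a_{n}\}\subset(0,1)$ satisfying simultaneously
\begin{equation*}
\sum_{n=1}^{\infty}(1-\a_{n})=\infty\qquad\text{and}\qquad\sum_{n=1}^{\infty}\frac{2(1-\a_{n})}{2^{n}}<\i,
\end{equation*}
which is easy since, for instance, $1-\a_{n}=c/n$ with $c$ small enough does the job. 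Then I define $S_{n}=(1-\a_{n})P+\a_{n}T_{n}$ and take $\cs=\{S_{n}\}$.

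Next I verify $\cs\in \frak{S}_{P}(X)$. Each $S_{n}$ is Markov as a convex combination of Markov operators. Using $T_{n}P=PT_{n}=P$ (which follows from $T_{n}\in\Sigma_{P}(X)$ and $T_{n}P=P$), a direct computation gives $S_{n}P=PS_{n}=P$, so $S_{n}\in\Sigma_{P}(X)$ and $S_{n}P=P$. Moreover
\begin{equation*}
\|S_{n}-T_{n}\|=(1-\a_{n})\|P-T_{n}\|\leq 2(1-\a_{n}),
\end{equation*}
which immediately yields $d(\ct,\cs)\leq \sum_{n}2(1-\a_{n})/2^{n}<\i$.

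The remaining step is to show $\cs\in \frak{S}_{P}^{u}(X)$. Exactly as in the example carried out in Section 6 of the paper, I compute $S_{n}S_{m}=(1-\a_{n}\a_{m})P+\a_{n}\a_{m}T_{n}T_{m}$ using $P^{2}=P$ and $PT_{j}=T_{j}P=P$, and then an easy induction on $n-k$ gives
\begin{equation*}
S^{k,n}=\bigg(1-\prod_{j=k+1}^{n}\a_{j}\bigg)P+\bigg(\prod_{j=k+1}^{n}\a_{j}\bigg)T^{k,n}.
\end{equation*}
Consequently,
\begin{equation*}
\|S^{k,n}-P\|\leq\bigg(\prod_{j=k+1}^{n}\a_{j}\bigg)\|T^{k,n}-P\|\leq 2\prod_{j=k+1}^{n}\a_{j},
\end{equation*}
and since $\sum(1-\a_{j})$ diverges, the product $\prod_{j=k+1}^{n}\a_{j}$ tends to $0$ as $n\to\infty$ for every fixed $k$, establishing uniform $P$-ergodicity of $\{S^{k,n}\}$.

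The argument is mostly bookkeeping; there is no serious obstacle, since the hypotheses $T_{n}\in\Sigma_{P}(X)$ and $T_{n}P=P$ make the combinatorics of the products collapse cleanly. The only point requiring care is to choose $\{\a_{n}\}$ so that the \emph{same} sequence witnesses both the smallness of $d(\ct,\cs)$ (a summability condition weighted by $2^{-n}$) and the divergence needed for uniform $P$-ergodicity; picking $1-\a_{n}$ of order $1/n$ (scaled by $\i$) handles both simultaneously.
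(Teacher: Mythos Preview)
Your proof is correct and follows essentially the same approach as the paper: perturb each $T_{n}$ toward $P$ via a convex combination and exploit $PT_{n}=T_{n}P=P$ to collapse the product formula. The only difference is that the paper uses the constant coefficient $\a_{n}=1-\i/2$, which already satisfies both of your requirements (divergence of $\sum(1-\a_{n})$ and $d(\ct,\cs)\leq\i$) and makes the product simply $(1-\i/2)^{n-k}$; your variable choice $1-\a_{n}\sim c/n$ works but is more elaborate than necessary.
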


\begin{proof}
Let $\epsilon >0$ and let $\ct\in \frak{S}_{P}(X)$. For each $n\in
\bn$, put $S_{n}=\frac{\epsilon}{2}P+
(1-\frac{\epsilon}{2})T_{n}$. Clearly, $\{S_{n}\}$ is a sequence
of Markov operators on $X$ satisfying: $S_{n}P=PS_{n}=P$, and
$\cs=\{S_{n}\}\in \frak{S}_{P}(X)$. Moreover, one has
\[d(\cs,\ct)= \sum_{n=1}^\infty \frac{\|S_n-T_n\|}{2^n}= \frac{\epsilon}{2}\sum_{n=1}^\infty \frac{\|P-T_n\|}{2^n}< \epsilon .\]
Now, we claim that the generated NHMC $\{S^{k,n}\}$ is uniformly $P$-ergodic. Notice that
\[S^{k,n}=S_{n}\ldots S_{k+1}= \bigg(1-\big(1-\frac{\epsilon}{2}\big)^{n-k}\bigg)P+ \bigg(1-\frac{\epsilon}{2}\bigg)^{n-k}T_{n}\ldots T_{k+1}.\]
Hence, we have
\begin{eqnarray*}
\|S^{k,n}-P\| &\leq &  \bigg(1-\frac{\epsilon}{2}\bigg)^{n-k}\| P-T_{n}\ldots T_{k+1}\|\\
& \leq & 2  \bigg(1-\frac{\epsilon}{2}\bigg)^{n-k}\to 0,
\end{eqnarray*}
as $n\to\infty$. So, we arrive at $\cs\in \frak{S}_{P}^u(X)$ which
completes the proof.
\end{proof}

\begin{Theorem}\label{GGK}
  Let $(X,X_+,\ck,f)$ be an abstract state space, and let $\{P_n\}$ be a
left decreasing sequence of Markov projections of $X$. Then $\frak{S}_{\{P_n\}}^{w}(X)$ is a dense $G_\d$-subset of $\frak{S}_{\{P_n\}}(X)$.
\end{Theorem}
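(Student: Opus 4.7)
The plan is to handle density and the $G_\d$-property separately. For density, given $\ct=\{T_n\}\in\frak{S}_{\{P_n\}}(X)$ and $\i>0$, I would use the truncated approximant: pick $N\in\bn$ with $1/2^{N-1}<\i$ and set $S_n=T_n$ for $n\leq N$ and $S_n=P_n$ for $n>N$. Since each $P_n$ is itself a Markov projection satisfying $P_nP_n=P_n$, we have $\cs=\{S_n\}\in\frak{S}_{\{P_n\}}(X)$, and because $\|T_n-P_n\|\leq 2$,
\[
d(\ct,\cs)\leq\sum_{n>N}\frac{2}{2^n}=\frac{1}{2^{N-1}}<\i.
\]
To verify $\cs\in\frak{S}_{\{P_n\}}^w(X)$, fix $m$ and take $n>\max(m,N)$. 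By Lemma \ref{PP}(i), $P_jP_{j-1}=P_j$, so telescoping yields $P_nP_{n-1}\cdots P_{N+1}=P_n$; using $P_nP_k=P_n$ for $n\geq k$ together with $P_kT_k=P_k$, one gets $P_nT_j=P_nP_jT_j=P_nP_j=P_n$ for each $j<n$, which iterated gives $P_nT^{m,N}=P_n$. Combining, $S^{m,n}=P_n T^{m,N}=P_n$. Since $P_{m+1}\leq^\ell P_n$ for $n\geq m+1$ implies $N_{P_{m+1}}\subseteq N_{P_n}$, we have $P_nx=0$ for every $x\in N_{P_{m+1}}$, so $\d_{P_{m+1}}(S^{m,n})=0$ for all sufficiently large $n$; hence $\cs$ is weakly ergodic w.r.t.\ $\{P_n\}$.

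For the $G_\d$ property I would first observe the monotonicity $\d_{P_{m+1}}(T^{m,n+1})\leq\d_{P_{m+1}}(T^{m,n})$: since $T^{m,n+1}=T_{n+1}T^{m,n}$ and $\|T_{n+1}\|=1$, for $x\in N_{P_{m+1}}$ one has $\|T^{m,n+1}x\|\leq\|T^{m,n}x\|$. Consequently $\lim_n\d_{P_{m+1}}(T^{m,n})=\inf_n\d_{P_{m+1}}(T^{m,n})$, so
\[
\frak{S}_{\{P_n\}}^w(X)=\bigcap_{m,k\in\bn}\bigcup_{N\in\bn}\Big\{\ct:\d_{P_{m+1}}(T^{m,N})<\tfrac{1}{k}\Big\}.
\]
A standard telescoping bound yields $\|T^{m,N}-S^{m,N}\|\leq\sum_{j=m+1}^{N}\|T_j-S_j\|\leq 2^{N+1}d(\ct,\cs)$, so $\ct\mapsto T^{m,N}$ is Lipschitz in $d$. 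Combined with the $1$-Lipschitz continuity of $T\mapsto\d_{P_{m+1}}(T)$ (Theorem \ref{MA1-Thr. 3.7}(ii)), each inner set is open, so $\frak{S}_{\{P_n\}}^w(X)$ is a $G_\d$-set.

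I expect the main obstacle to be the density step. A naive convex combination $S_n=\alpha P_n+(1-\alpha)T_n$ (as used earlier for the uniformly $P$-ergodic result) does not straightforwardly yield weak ergodicity here, because the projections $P_n$ change with $n$ and the bound $\|(1-\alpha)^n Q_n\cdots Q_{m+1}\|$ does not tend to zero for small $\alpha$. The truncation bypasses this by exploiting the left-consistency identities $P_nP_k=P_n$ and $P_kT_k=P_k$ to collapse the tail of $S^{m,n}$ entirely to $P_n$, and the kernel inclusion $N_{P_{m+1}}\subseteq N_{P_n}$ to force $\d_{P_{m+1}}(P_n)=0$; once these structural reductions are in place, both parts of the proof become routine.
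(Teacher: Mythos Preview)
Your proof is correct, but the density argument differs from the paper's. The paper uses precisely the convex combination $S_n=\tfrac{\i}{2}P_n+(1-\tfrac{\i}{2})T_n$ that you dismissed as not ``straightforwardly'' working. The key observation you missed is a one-step contraction on the right kernel: if $x-y\in N_{P_n}$ then $S_n(x-y)=(1-\tfrac{\i}{2})T_n(x-y)$, so $\|S_n(x-y)\|\le(1-\tfrac{\i}{2})\|x-y\|$; moreover, for $x-y\in N_{P_{m+1}}$ one has $S^{m,n-1}x-S^{m,n-1}y\in N_{P_n}$ because $P_nS^{m,n-1}=P_n$ (exactly the left-consistency identity you exploit). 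Iterating gives $\d_{P_{m+1}}(S^{m,n})\le(1-\tfrac{\i}{2})^{n-m}$. So the convex combination does work, without passing through Theorem~\ref{Thr-6.1}. Your truncation argument is equally valid and in a sense sharper: it forces $\d_{P_{m+1}}(S^{m,n})=0$ for all large $n$, at the cost of being a coarser approximant (you only match finitely many coordinates, whereas the paper's $\cs$ is uniformly $\tfrac{\i}{2}$-close to $\ct$ in every coordinate).

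Two minor points. First, you wrote ``$P_{m+1}\leq^\ell P_n$ for $n\geq m+1$''; the left-decreasing property gives the opposite relation $P_n\leq^\ell P_{m+1}$ (i.e.\ $P_nP_{m+1}=P_n$), which is in fact what you use to conclude $N_{P_{m+1}}\subseteq N_{P_n}$. Second, your $G_\d$ argument is essentially identical to the paper's; your monotonicity proof via $\|T_{n+1}\|=1$ is slightly cleaner than the paper's, which invokes the product inequality $\d_{P_{m+1}}(T_{n+1}T^{m,n})\le\d_{P_{m+1}}(T_{n+1})\d_{P_{m+1}}(T^{m,n})$.
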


\begin{proof}
 Let $\epsilon >0$ and let $\ct=\{T_{n}\}\in \frak{S}_{\{P_n\}}(X)$. For each $n\in \bn$, we define
$$S_{n}=\frac{\epsilon}{2}P_{n}+ (1-\frac{\epsilon}{2})T_{n}.$$
As in Example \ref{Ex.6.7}, $\{S_{n}\}$ is a sequence of Markov
operators on $X$ satisfying:  $S_{n}P_{n}=P_{n}S_{n}=P_{n}$, and
$\cs=\{S_{n}\}\in \frak{S}_{\{P_n\}}(X)$. Moreover, one has
\[d(S,T)= \sum_{n=1}^\infty \frac{\|S_n-T_n\|}{2^n}= \frac{\epsilon}{2}\sum_{n=1}^\infty \frac{\|P_n-T_n\|}{2^n}< \epsilon .\]

For every $x$ and $y$ with $x-y\in N_{P_{n}}$, (for all $ n\in
\bn$) we have
\begin{eqnarray}\label{gg1}
  \|S_nx-S_ny\| &=& \|\frac{\epsilon}{2}P_{n}x+ (1-\frac{\epsilon}{2})T_{n}x-\frac{\epsilon}{2}P_{n}y- (1-\frac{\epsilon}{2})T_{n}y \| \nonumber \\
   &=& (1-\frac{\epsilon}{2})\|  T_n(x-y)\|\nonumber \\
   &\leq & (1-\frac{\epsilon}{2})\|(x-y)\|.
\end{eqnarray}

For all $m,n\in\bn$ with $m<$, one has $P_n(S^{m,n-1})=P_n$ which,
for all $x,y$ with $x-y\in N_{P_{m+1}}$, implies
$S^{m,n-1}x-S^{m,n-1}y \in N_{P_n}$. Indeed,
\[P_n(S^{m,n-1}x-S^{m,n-1}y)= P_n(x-y)=P_nP_{m+1}(x-y)=0.\]
Therefore, iterating \eqref{gg1}, we obtain
\begin{eqnarray*}
  \|S^{m,n}x-S^{m,n}y\| &=& \|S_n(S^{m,n-1}x-S^{m,n-1}y) \|\\
  &\leq & (1-\frac{\epsilon}{2}) \|  S^{m,n-1}x-S^{m,n-1}y\|\\
  & \vdots & \\
  & \leq & \big(1-\frac{\epsilon}{2}\big)^{n-m}\|(x-y)\|,
  \end{eqnarray*}
  which yields to
  \[\d_{P_{m+1}}(S^{m,n})\leq \big(1-\frac{\epsilon}{2}\big)^{n-m} \to 0,\ \text{as}\ n\to \infty ,\]
and this proves that $\{S^{m,n}\}$ is weakly ergodic w.r.t.
$\{P_n\}$. Hence, $\frak{S}_{\{P_n\}}^{w}(X)$ is a dense subset of
$\frak{S}_{\{P_n\}}(X)$.

Now for every $m\in \bn$, we have
\begin{eqnarray*}
 \d_{P_{m+1}}(T^{m,n+1})  &=&  \d_{P_{m+1}}(T_{n+1}T^{m,n})  \\
   &\leq & \d_{P_{m+1}}(T_{n+1}) \d_{P_{m+1}}(T^{m,n})\\
   &\leq & \d_{P_{m+1}}(T^{m,n}),
\end{eqnarray*}
which proves that $\{ \d_{P_{m+1}}(T^{m,n})\}_{n=1}^\infty$ is a
non-increasing sequence. Therefore,
\begin{equation}\label{Gd-SET}
\frak{S}_{\{P_n\}}^{w}(X)= \bigcap_{m=1}^\infty
\bigcap_{k=1}^\infty \bigcup_{n=m+1}^\infty  \left \{ \ct\in
\frak{S}_{\{P_n\}}(X);\ \d_{P_{m+1}}(T^{m,n})<\frac{1}{k} \right
\}.
\end{equation}
For all $m<n$, define the mapping
$\Phi_{m,n}:\frak{S}_{\{P_n\}}(X) \rightarrow [0,1]$ by
\[\Phi_{m,n}(\ct)=\d_{P_{m+1}}(T^{m,n}).\]
One can see that $\Phi_{m,n}$ is continuous, indeed
\begin{eqnarray*}
|\Phi_{m,n}(\ct)-\Phi_{m,n}(\cs)| &=&  |\d_{P_{m+1}}(T^{m,n})-\d_{P_{m+1}}(S^{m,n})|\\
   &\leq& \|T^{m,n}-S^{m,n} \|\\
   &=& \|T_nT_{n-1}\ldots T_{m+1}- S_nS_{n-1}\ldots S_{m+1}\| \\
   &\leq & \sum_{k=m+1}^n \| T_k-S_k\|\\
   &\leq & 2^n\sum_{k=1}^\infty \frac{ \| T_k-S_k\|}{2^k}\\
  &=& 2^nd(\ct,\cs).
\end{eqnarray*}

Hence, for every $k\in\bn$ the set
$\Phi_{m,n}^{-1}([0,\frac{1}{k}))$ is an open subset of
$\frak{S}_{\{P_n\}}(X)$, and from \eqref{Gd-SET}, we infer that
$\frak{S}_{\{P_n\}}^{w}(X)$ is a $G_\d$-subset of
$\frak{S}_{\{P_n\}}(X)$. This completes the proof.
\end{proof}

\begin{remark} We point out that the residual properties of
homogeneous Markov chains on $C^*$-algebras or von Nuemmann
algebras were intensively studied in \cite{BK,BK1,M0}. In the case
$X=L^1(\m)$, these results were investigated in \cite{B0,Iw}. An
analogue of Theorem \ref{GGK} has been proved in \cite{Iw,P} when
$X=\ell^1$. Therefore, our results are even new in the case of
$X=L^1(\m)$.
\end{remark}
%
%
%
%
\appendix

\section{Examples of Abstract State Spaces}

 Let us provide some examples of
abstract state spaces.

\begin{example}\label{E1}
  Let $M$ be a von Neumann algebra. Let $M_{h,*}$ be the
Hermitian part of the predual space $M_*$ of $M$. As a base $\ck$
we define the set of normal states of $M$. Then
$(M_{h,*},M_{*,+},\ck,\id)$ is a strong abstract state spaces,
where $M_{*,+}$ is the set of all positive functionals taken from
$M_*$, and $\id$ is the unit in $M$. In particular, if
$M=L^\infty(E,\m)$, then $M_{*}=L^1(E,\m)$ is an abstract state
space.
\end{example}

\begin{example}\label{E12}
 Let $A$ be a real ordered linear space and, as before,
let $ A_+$ denote the set of positive elements of $A$. An element
$e\in A_+$ is called \textit{order unit} if for every $a\in A$
there exists a number $\l\in\br_+$ such that $-\l e\leq a\leq\l
e$. If the order is Archimedean, then the mapping
$a\to\|a\|_e=\inf\{\l> 0\ : \ -\l e\leq a\leq\l e\}$  is a norm.
If $A$ is a Banach space with respect to this norm, the pair $(A,
e)$ is called \textit{an order-unit space with the order unit
$e$}. An element $\rho\in A^*$ is called \textit{positive} if
$\rho(x)\geq 0$ for all $a\in A_+$. By $A^*_+$ we denote the set
of all positive functionals. A positive linear functional is
called a \textit{state} if $\rho(e)=1$. The set of all states is
denoted by $S(A)$. Then it is well-known that $(A^*,A^*_+,S(A),e)$
is a strong abstract state space \cite{Alf}. In particular, if
$\ga_{sa}$ is the self-adjoint part of an unital $C^*$-algebra,
$\ga_{sa}$ becomes order-unit spaces, hence
$(\ga_{sa}^*,\ga_{sa,+}^*,S(\ga_{sa}),\id)$ is a strong abstract
state space.
\end{example}

\begin{example}\label{E13}
Let $X$ be a Banach space over $\br$. Consider a new Banach space
$\mathcal{X}=\br\oplus X$ with a norm
$\|(\a,x)\|=\max\{|\a|,\|x\|\}$. Define a cone
$\mathcal{X}_+=\{(\a,x)\ : \ \|x\|\leq \a, \ \a\in\br_+\}$ and a
positive functional $f(\a,x)=\a$. Then one can define a base
$\ck=\{(\a,x)\in\mathcal{X}:\ f(\a,x)=1\}$. Clearly, we have
$\ck=\{(1,x):\ \|x\|\leq 1\}$. Then
$(\mathcal{X},\mathcal{X}_+,\ck,f)$ is an abstract state space
\cite{Jam}. Moreover, $X$ can be isometrically embedded into
$\mathcal{X}$.  Using this construction one can study several
interesting examples of abstract state spaces.
\end{example}

\begin{example}\label{E14}  Let $A$ be the disc algebra, i.e. the sup-normed space
of complex-valued functions which are continuous on the closed
unit disc, and analytic on the open unit disc. Let $X =\{f\in A :\
f(1)\in\br \}$. Then $X$ is a real Banach space with the following
positive cone $X_+=\{f\in X: f(1)=\|f\|\}=\{f\in X: \
f(1)\geq\|f\|\}$. The space $X$ is an abstract state space, but
not strong one (see  \cite{Yo} for details).
\end{example}

\section{Examples of Markov operators}

 Let us consider several examples of Markov operators.

\begin{example}\label{E2}

Let $X=L^1(E,\m)$ be the classical $L^1$-space. Then any
transition probability $P(x,A)$ defines a Markov operator $T$ on
$X$, whose dual $T^*$ acts on $L^\infty(E,\m)$ as follows
$$
(T^*f)(x)=\int f(y)P(x,dy), \ \ f\in L^\infty.
$$
\end{example}

\begin{example}\label{E22}
Let $M$ be a von Neumann algebra, and consider
$(M_{h,*},M_{*,+},\ck,\id)$ as in Example \ref{E1}. Let $\Phi:
M\to M$ be a positive, unital ($\Phi(\id)=\id$) linear mapping.
Then the operator given by $(Tf)(x)=f(\Phi(x))$, where $f\in
M_{h,*}, x\in M$, is a Markov operator.
\end{example}

\begin{example}\label{E23} Let $X=C[0,1]$ be the space of real-valued continuous
functions on $[0,1]$. Denote
$$
X_+=\big\{x\in X: \ \max_{0\leq t\leq 1}|x(t)-x(1)|\leq 2
x(1)\big\}.
$$
Then $X_+$ is a generating cone for $X$, and $f(x)=x(1)$ is a
strictly positive linear functional. Then $\ck=\{x\in X_+: \
f(x)=1\}$ is a base corresponding to $f$. One can check that the
base norm $\|x\|$ is equivalent to the usual one
$\|x\|_{\infty}=\max\limits_{0\leq t\leq 1}|x(t)|$. Due to
closedness of $X_+$ we conclude that $(X,X_+,\ck,f)$ is an
abstract state space. Let us define a mapping $T$ on $X$ as
follows:
$$(Tx)(t)=tx(t).
$$
It is clear that $T$ is a Markov operator on $X$.
\end{example}

\begin{example}\label{E24} Let $X$ be a Banach space over $\br$. Consider the
abstract state space $(\mathcal{X},\mathcal{X}_+,\tilde \ck,f)$
constructed in Example \ref{E13}.  Let $T:X\to X$ be a linear
bounded operator with $\|T\|\leq 1$. Then the operator
$\mathcal{T}: \mathcal{X}\to\mathcal{X}$ defined by
$\mathcal{T}(\a,x)=(\a,Tx)$ is a Markov operator.
\end{example}

\begin{example}\label{E25} Let $A$ be the disc algebra, and let $X$ be the
abstract state space as in Example \ref{E14}. A mapping $T$ given
by $Tf(z)=zf(z)$ is clearly a Markov operator on $X$.
\end{example}

\bibliographystyle{amsplain}

\end{document}